\documentclass{amsart}
\usepackage {amsmath, amscd}
\usepackage {amssymb}
\usepackage{hyperref,cite}
\usepackage{color}



\numberwithin{equation}{section}

\linespread{1.2}
\def\<{\langle}
\def\>{\rangle}

\def\EE{{\mathcal E}}

\def\II{{\mathcal I}}

\def\LL{{\mathcal L}}

\def\PP{{\mathcal P}}

\def\SS{{\mathcal S}}
\def\TT{{\mathcal T}}

\def\bbZ{\mathbb{Z}}
\def\bbN{\mathbb{N}}

\newcommand{\rank}{\mathop{\rm rank}}

\newcommand{\ran}{\mathop{\rm ran}}

\newcommand{\supp}{\mathop{\rm supp}}

\newtheorem{lemma}{Lemma}[section]
\newtheorem{theorem}[lemma]{Theorem}
\newtheorem{corollary}[lemma]{Corollary}

\theoremstyle{definition}
\newtheorem{remark}[lemma]{Remark}
\newtheorem{definition}[lemma]{Definition}

\newcommand{\Implies}{$\Rightarrow$}

\newcommand{\sime}{\stackrel{e}{\sim}}

\title[Schur coupling of operators on a  Hilbert space]{Schur coupling and related equivalence relations for operators on a  Hilbert space}
\author{Dan Timotin}

\address{Institute of Mathematics ``Simion Stoilow'' of the Romanian Academy, P.O. Box 1-764, Bucharest 
014700, Romania}
\email{Dan.Timotin@imar.ro}

\keywords{Equivalence after extension, Schur coupling, matricial coupling}

\subjclass{47A05, 47A64, 15A99}

\begin{document} 

\begin{abstract}
For operators on Hilbert spaces of any dimension, we show that equivalence after extension coincides with equivalence after one-sided extension, thus obtaining a  proof of their coincidence with Schur coupling. We also provide a concrete description of this equivalence relation in several  cases, in particular for compact operators.
\end{abstract}

\maketitle

\section{Introduction}

Motivated especially by applications to system theory and integral equations, several equivalence relations have been introduced, starting with~\cite{BGK}, for matrices and operators on Banach spaces: matricial coupling, equivalence after extension, Schur coupling, equivalence after one-sided extension (see  Section~\ref{se:prelim} for precise definitions  of the last three, with which we will be concerned). The coincidence of the first two was established in~\cite{BGK, BT1}. More discussion appears in~\cite{BT0}, where it is  proved that Schur coupling implies equivalence after extension. Equivalence after one-sided extension is introduced in~\cite{BGKR}; it is shown that it implies Schur coupling. In the case of separable Hilbert spaces, it has been recently proved in~\cite{HR} that equivalence after extension coincides with Schur coupling.

We restrict ourselves in this paper to Hilbert spaces. Without any separability assumption, we  show in this case that equivalence after extension coincides with equivalence after one-sided extension, thus providing a proof of the coincidence of all the discussed relations. The proof is quite transparent and may be used to obtain  concrete criteria for the equivalence of two operators.  In the case of compact operators, this becomes a simple relation between their respective singular values.

The tools used  stem from an older paper of Fillmore and Williams on operator ranges~\cite{FW} (which in turn is based on ideas of K\"othe~\cite{K1, K2}). The characterization of equivalence of operators therein (called below strong equivalence in order to avoid confusions) may be refined to provide descriptions of the other equivalence relations that concern us. 

The method uses spectral projections of selfadjoint operators and is therefore confined to the Hilbert space setting. The coincidence of the discussed equivalence relations  remains open in the case of a general infinite dimensional Banach space. It seems plausible that its validity should depend on geometric properties of the Banach space.

The plan of the paper is the following. After dealing with the preliminaries, we devote a whole section to our basic technical tool, a combinatorial lemma that we prefer to discuss separately without reference to Hilbert spaces and operators. The next section contains some preparatory  discussion of strong equivalence and equivalence after extension.  The main result, the coincidence of Schur coupling with equivalence after one-sided or two-sided extension, appears as Theorem~\ref{th:main} in Section~\ref{se:eoe}. It allows in Section~\ref{se:ch schur} to characterize concretely this equivalence relation for several classes, in particular for compact operators (see Theorems~\ref{th:schur coupling for compacts} and~\ref{th:main description2}).

\section{Preliminaries}\label{se:prelim}

All our operators will act between   Hilbert spaces that are not supposed to be separable. We denote by $\LL(X,Y)$ the space of linear operators from $X$ to $Y$; $\LL(X):=\LL(X,X)$.  The notations $\ker T$ and $\ran T$ indicate the kernel, respectively the range of the operator $T$. Note that $\ker T$ is always closed, while $\ran T$ might not be. Whenever $A\in\LL(X)$ is a positive operator,  $E_A$ denotes its spectral projection and $\supp A$ its support---that is, $\supp A=E_A((0,\infty))X=\ker A^\perp$.

Suppose $T\in\LL(X)$, $S\in\LL(Y)$ are bounded linear operators between  Hilbert spaces. We will define several notions of equivalence, as follows.

\begin{itemize}
\item[(E)] $T,S$ are called \emph{strongly equivalent} (notation $T\sim S$) if there exist invertible operators $U\in\LL(X,Y)$, $V\in\LL(Y,X)$, such that $S=UTV$. 

\item
[(EE)] $T,S$ are called \emph{equivalent after extension} if $T\oplus I_Y \sim S\oplus I_X$; we write this also $T\sime S$.
 

\item
[(EOE)] $T,S$ are called \emph{equivalent after one-sided extension} if either $T\oplus I_H\sim S$,  or $T\sim S\oplus I_H$ for some Hilbert space $H$. If $\dim H=k$, we say that $T,S$ are called \emph{equivalent after  $k$-one-sided extension}.

\item
[(SC)] $T,S$ are called \emph{Schur coupled} if there exists operators $A,B,C,D$ with $A,D$ invertible, such that $T=A-BD^{-1}C$ and $S=D-CA^{-1}B$.

\end{itemize}

A few remarks are in order. What we call ``strongly equivalent'' is often (for instance in~\cite{FW}) called just ``equivalent''; we prefer our choice in order to avoid possible confusions. The definition of equivalence after extension usually does not specify the space on which the identity operators act---one assumes it is just some Banach space. However,  it actually intervenes (in~\cite{BGK, BGKR, HR})  in the way we have stated it above.

Several implications are known to be true for these equivalence relations. Obviously (E)\Implies (EOE)\Implies (EE). It is shown in~\cite{BGKR} (Theorem~2 and Proposition~5) that (EOE)\Implies (SC)\Implies (EE). All these implications have an algebraic nature and are valid for operators acting in general Banach spaces.
Finally, the recent paper ~\cite{HR} shows that, if the operators act in separable Hilbert spaces, then (EE)\Implies (SC) (and thus Schur coupling and equivalence after extension coincide). The proof uses a result of Feldman and Kadison~\cite{FK} on the closure of invertible operators and is less constructive.

Below we will  show  directly, without any separability assumption, that on Hilbert spaces (EE)\Implies(EOE),  providing thus also a proof of the coincidence of Schur coupling with these relations.

A few immediate remarks that will  often be used without comment are contained in the next lemma.

\begin{lemma}\label{le:immediate remarks equivalence}
\begin{itemize}
\item
[(i)] If $T\sim S$, $T'\sim S'$, then $T\oplus T'\sim S\oplus S'$.
\item
[(ii)] If $T\sime S$, $T'\sime S'$, then $T\oplus T'\sime S\oplus S'$.
\item[(iii)]
Any two invertible operators on spaces of the same dimension are strongly equivalent.
\item[(iv)]
If $T\sime S$ (in particular, if $T\sim S$), then the following equalities are satisfied:
\begin{equation}\label{eq:index}
\dim\ker T = \dim\ker S,\quad \dim\ker T^* = \dim\ker S^*.
\end{equation}
\end{itemize}
\end{lemma}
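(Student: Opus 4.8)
The lemma collects four
routine but useful facts, so the plan is to verify each clause in turn,
reducing everything to the definitions.

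\textbf{Part (i).}
Suppose $S=UTV$ and $S'=U'T'V'$ with $U,V,U',V'$ invertible.
The plan is to take the direct sums $U\oplus U'$ and $V\oplus V'$, which are
invertible on the appropriate direct-sum spaces, and observe that
$(U\oplus U')(T\oplus T')(V\oplus V')=UTV\oplus U'T'V'=S\oplus S'$.
This is a direct computation with block-diagonal operators and presents no
obstacle.

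\textbf{Part (ii).}
By definition $T\sime S$ means $T\oplus I_Y\sim S\oplus I_X$ and similarly
$T'\oplus I_{Y'}\sim S'\oplus I_{X'}$. Applying part (i) to these two strong
equivalences gives
$(T\oplus I_Y)\oplus(T'\oplus I_{Y'})\sim(S\oplus I_X)\oplus(S'\oplus I_{X'})$.
The only work is to rearrange the summands: after a permutation of the
direct-sum factors (which is implemented by an invertible operator, so it
preserves $\sim$) the left side becomes $(T\oplus T')\oplus I_{Y\oplus Y'}$ and
the right side becomes $(S\oplus S')\oplus I_{X\oplus X'}$, which is exactly
$T\oplus T'\sime S\oplus S'$. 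The mild bookkeeping of this reshuffling is the
only thing requiring care.

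\textbf{Part (iii).}
If $T\in\LL(X)$ and $S\in\LL(Y)$ are invertible with $\dim X=\dim Y$, fix any
invertible $W\in\LL(X,Y)$ (which exists because the spaces have equal
dimension). Then $S=WT(T^{-1}W^{-1}S)$; here $T^{-1}W^{-1}S\in\LL(Y,X)$ is a
composition of invertibles, hence invertible, so this exhibits $T\sim S$ in the
required form $S=UTV$.

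\textbf{Part (iv).}
It suffices to prove the two kernel identities under the hypothesis $T\sim S$,
since $T\sime S$ means precisely $T\oplus I_Y\sim S\oplus I_X$, and the identity
summands contribute trivial kernels ($\ker I=\{0\}$) to both $T\oplus I_Y$ and
$S\oplus I_X$, so the equalities for the extended operators are equivalent to
those for $T$ and $S$ themselves. So assume $S=UTV$ with $U,V$ invertible. Then
$V$ maps $\ker S$ bijectively onto $\ker T$: indeed $Sx=0\iff UTVx=0\iff TVx=0$
since $U$ is injective, so $x\in\ker S\iff Vx\in\ker T$, and $V$ is a linear
bijection. Hence $\dim\ker T=\dim\ker S$. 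For the cokernel identity, I would
pass to adjoints: $S^*=V^*T^*U^*$ with $V^*,U^*$ invertible, so $T^*\sim S^*$
and the same argument yields $\dim\ker T^*=\dim\ker S^*$. I do not expect any
genuine obstacle in this lemma; the only point meriting attention is the
rearrangement of summands in (ii), which is harmless because a permutation of
direct-sum factors is realized by an invertible (indeed unitary) operator and
therefore respects strong equivalence.
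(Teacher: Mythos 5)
Your proof is correct. The paper states this lemma without proof (as ``immediate remarks'' to be used without comment), and your clause-by-clause verification from the definitions --- block-diagonal invertibles for (i), applying (i) plus a permutation of summands for (ii), the factorization $S=WT(T^{-1}W^{-1}S)$ for (iii), and $V(\ker S)=\ker T$ together with passage to adjoints for (iv) --- is precisely the intended routine argument.
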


The \emph{kernel condition}~\eqref{eq:index} will  often appear in the sequel.

The next result concerning ranges of positive operators is well-known (and easy to prove).

\begin{lemma}\label{le:range of selfadjoint}
Suppose $A$ is a positive operator, and $0<\delta<1$. For $n\in\bbZ$ denote by $\EE_n$ the image of $E_A([\delta^{n+1},\delta^n))$ (so $\EE_n=\{0\}$ for $-n$ sufficiently large). If $x=\sum_{n\in\bbZ} x_n$, $x_n\in\EE_n$ is the orthogonal decomposition of a vector in $\supp A$, then $x$ is in the range of $A$ if and only if $\sum_{n=0}^\infty \delta^{-2n}\|x_n\|^2<\infty$.
\end{lemma}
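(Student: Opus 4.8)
The plan is to diagonalize $A$ along the spectral subspaces $\EE_n$ and thereby reduce the membership $x\in\ran A$ to a weighted $\ell^2$ condition on the components $x_n$. First I would observe that the half-open intervals $[\delta^{n+1},\delta^n)$, $n\in\bbZ$, are pairwise disjoint and cover $(0,\infty)$ (since $\delta^n\to 0$ as $n\to+\infty$ and $\delta^n\to\infty$ as $n\to-\infty$), so the projections $E_A([\delta^{n+1},\delta^n))$ are pairwise orthogonal and sum strongly to $E_A((0,\infty))$, the projection onto $\supp A$. Hence each $\EE_n$ reduces $A$ and $\supp A=\bigoplus_{n\in\bbZ}\EE_n$, which is exactly the decomposition $x=\sum_n x_n$ appearing in the statement. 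Writing $A_n:=A|_{\EE_n}$, the spectral theorem gives the two-sided bound
\[
\delta^{n+1}\|v\|\le \|A_n v\|\le \delta^n\|v\|\qquad(v\in\EE_n),
\]
so each $A_n$ is invertible on $\EE_n$ with $\|A_n^{-1}\|\le\delta^{-(n+1)}$.

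Next I would treat the two implications symmetrically through this bound. Suppose $x=Ay$; replacing $y$ by its orthogonal projection onto $\supp A$ (which does not change $Ay$), write $y=\sum_n y_n$ with $y_n\in\EE_n$. Because $A$ reduces $\EE_n$ we get $x_n=A_n y_n$, and the upper bound above gives $\|x_n\|\le\delta^n\|y_n\|$, hence $\delta^{-2n}\|x_n\|^2\le\|y_n\|^2$; summing over $n\ge 0$ yields $\sum_{n\ge0}\delta^{-2n}\|x_n\|^2\le\|y\|^2<\infty$. Conversely, given $\sum_{n\ge0}\delta^{-2n}\|x_n\|^2<\infty$, I would set $y_n:=A_n^{-1}x_n\in\EE_n$; the bound on $\|A_n^{-1}\|$ gives $\|y_n\|^2\le\delta^{-2}\,\delta^{-2n}\|x_n\|^2$, so $y:=\sum_n y_n$ defines a vector of $X$ and, by continuity of $A$ together with the reducing property, $Ay=\sum_n A_n y_n=\sum_n x_n=x$, i.e. $x\in\ran A$.

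The one point needing care---and the sole reason the summation index starts at $0$---is the passage between the full sum over $\bbZ$ and the tail sum over $n\ge0$. For $n<0$ we have $-2n>0$, hence $\delta^{-2n}\le 1$, so $\sum_{n<0}\delta^{-2n}\|x_n\|^2\le\sum_{n<0}\|x_n\|^2\le\|x\|^2<\infty$ automatically; thus the negative-index terms are harmless and finiteness of the whole series over $\bbZ$ is equivalent to finiteness of $\sum_{n\ge0}\delta^{-2n}\|x_n\|^2$. With this observation both implications go through using the full series, and the only remaining work is the routine check that the term-by-term application of $A$ and $A^{-1}$ is justified by norm convergence, which the two-sided spectral bound supplies.
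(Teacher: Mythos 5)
Your proof is correct: the decomposition of $\supp A$ into the reducing subspaces $\EE_n$, the two-sided spectral bound $\delta^{n+1}\|v\|\le\|A_nv\|\le\delta^n\|v\|$ giving invertibility of each block, and the observation that the terms with $n<0$ are automatically summable (so the full series over $\bbZ$ and the tail over $n\ge0$ converge together) are exactly what is needed, and you correctly justify passing $A$ through the infinite sum by norm convergence. The paper itself gives no proof --- it records the lemma as ``well-known (and easy to prove)'' --- and your argument is precisely the standard one it has in mind, so there is nothing to compare beyond noting that you have filled in the omitted details correctly.
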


\section{A combinatorial lemma}\label{se:combin lemma}

This section is devoted to  a combinatorial lemma which is our main technical tool. It is essentially inspired by the proof of~\cite[Theorem 3.3]{FW}, but we need a refinement of the result therein.

\begin{lemma}\label{le:combinatorics} Let 
$\TT, \SS$ be two  sets, $N\ge 1 $ an integer, $0<\delta<1\le M$,  and $\tau:\TT\to(0,M]$, $\sigma:\SS\to(0,M]$  two functions. 

\begin{enumerate}
\item

Suppose that for any integers $k\ge N$, $\ell\ge 1$,  the following inequalities are satisfied:
\begin{align}
\label{eq:combinatorial lemma1}
\#\tau^{-1}( [\delta^{k+\ell}, \delta^{k}))&\le \#\sigma^{-1}( [\delta^{k+\ell+1}, \delta^{k-1}))\\
\#\sigma^{-1}( [\delta^{k+\ell}, \delta^{k}))&\le \#\tau^{-1}( [\delta^{k+\ell+1}, \delta^{k-1}))\label{eq:combinatorial lemma2}
\end{align}

Then for some $\delta'>0$ one of the following is true:
\begin{itemize}
\item
[(i)]
There exists a bijection $\eta:\TT\to\SS$ such that 
\begin{equation}\label{eq:delta' inequalities}
\delta'\le \frac{\tau(t)}{\sigma(\eta(t))}\le \frac{1}{\delta'} 
\end{equation}
for all $t\in\TT$.

\item
[(ii)] For some  set $\II$, if we extend $\tau$ to $\TT\cup\II$ by putting $\tau(t)=1$ for all $t\in\II$, then there exists a bijection $\eta:\TT\cup\II\to\SS$ such that~\eqref{eq:delta' inequalities} are true for all $t\in\TT\cup\II$.

\item
[(iii)] For some set $\II$, if we extend $\sigma$ to $\SS\cup\II$ by putting $\sigma(s)=1$ for all $s\in\II$, then there exists a bijection $\eta:\TT\to\SS\cup\II$ such that~\eqref{eq:delta' inequalities} are true for all $t\in\TT$.
\end{itemize}

\item

In case inequalities~\eqref{eq:combinatorial lemma1} and~\eqref{eq:combinatorial lemma2} are true for all $k\in\bbZ$, then \emph{(i)}  above is true.

\end{enumerate}
\end{lemma}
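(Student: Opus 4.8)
The plan is to pass to a logarithmic scale and recognize the hypotheses as Hall-type conditions for a bipartite matching of bounded displacement. For $n\in\bbZ$ write $\TT_n=\tau^{-1}([\delta^{n+1},\delta^n))$ and $\SS_n=\sigma^{-1}([\delta^{n+1},\delta^n))$, so that $\TT=\bigsqcup_n\TT_n$ and $\SS=\bigsqcup_n\SS_n$, and (because $\tau,\sigma$ take values in $(0,M]$ with $M\ge1$) all bins with index below some finite $n_0$ are empty. In these terms $\tau^{-1}([\delta^{k+\ell},\delta^k))=\bigsqcup_{n=k}^{k+\ell-1}\TT_n$ and $\sigma^{-1}([\delta^{k+\ell+1},\delta^{k-1}))=\bigsqcup_{m=k-1}^{k+\ell}\SS_m$, so~\eqref{eq:combinatorial lemma1} asserts exactly that each block of $\TT$-bins is dominated, in cardinality, by the block of $\SS$-bins enlarged by one index on each side, and~\eqref{eq:combinatorial lemma2} says the symmetric thing. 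I would aim to produce a bijection $\eta$ that moves each bin by at most one index, i.e. $\eta(\TT_n)\subseteq \SS_{n-1}\cup\SS_n\cup\SS_{n+1}$; for such an $\eta$, if $t\in\TT_n$ then $\tau(t)\in[\delta^{n+1},\delta^n)$ and $\sigma(\eta(t))\in[\delta^{n+2},\delta^{n-1})$, whence
\[
\delta^2\le \frac{\tau(t)}{\sigma(\eta(t))}\le \delta^{-2},
\]
and~\eqref{eq:delta' inequalities} holds with $\delta'=\delta^2$.

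For part (2) this is precisely the setting of Hall's theorem for the bipartite graph $G$ on $\TT\sqcup\SS$ joining $t$ and $s$ whenever their bin indices differ by at most one. First I would check that the interval inequalities~\eqref{eq:combinatorial lemma1} propagate to the full Hall condition for every set of $\TT$-bins (and, by~\eqref{eq:combinatorial lemma2}, for every set of $\SS$-bins): decomposing an arbitrary index set $A$ into maximal runs, one observes that when the enlarged neighbourhoods of two runs overlap the interval inequality applied to their convex hull already dominates the union, so disconnected sets produce no constraint beyond the intervals. The genuine difficulty is that Hall's condition alone does not guarantee a perfect matching in an infinite bipartite graph, and the bins may have arbitrary infinite cardinality. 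Here I would exploit the one-dimensional, locally finite interval structure of $G$: sweeping through the bins in increasing order of $n$ and matching greedily, one carries forward only a single bin's worth of unmatched elements at a time, and the two-sided inequalities~\eqref{eq:combinatorial lemma1}--\eqref{eq:combinatorial lemma2} keep this carried surplus absorbable at the next step, so that the sweep terminates in a bijection saturating both sides. This greedy construction, and its correctness for arbitrary cardinals, is where the real work lies.

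For part (1) the inequalities hold only for $k\ge N$, so they constrain only the infinite tail of small values (large index), while the top region of bins $n_0\le n<N$ is finite in number and unconstrained. The key simplification is that on this top region every value lies in the bounded range $[\delta^{N},M]$, so \emph{any} pairing of top elements automatically satisfies~\eqref{eq:delta' inequalities} after shrinking $\delta'$ to a constant depending on $N,\delta,M$; displacement need not be controlled there. I would therefore run the same greedy sweep from the top bin $n_0$ upward: from index $N$ on, the hypotheses force the matching through with displacement at most one, and the only possible failure is a cardinality imbalance accumulated across the finitely many top bins. Writing $c_\TT$, $c_\SS$ for the numbers of elements left unmatched in the top region, trichotomy of cardinals gives exactly one of $c_\TT<c_\SS$, $c_\TT=c_\SS$, $c_\TT>c_\SS$, hence cases (i), (ii), (iii) respectively; in the last two one adjoins to the deficient side a set $\II$ of elements of value $1$ (permissible since $1\in(0,M]$) and matches them to the surplus top elements of the other side, whose values are bounded below, so the ratio with $1$ stays within the adjusted $\delta'$. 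The main obstacle remains, as in part (2), the construction of the bounded-displacement bijection for arbitrary cardinalities, together with the careful bookkeeping of the tail--top interface, where the one-index slack built into~\eqref{eq:combinatorial lemma1}--\eqref{eq:combinatorial lemma2} is exactly what is needed.
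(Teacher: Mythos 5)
Your setup agrees with the paper's: the bins $\TT_n=\tau^{-1}([\delta^{n+1},\delta^n))$, $\SS_n=\sigma^{-1}([\delta^{n+1},\delta^n))$, the goal of a displacement-one bijection giving $\delta'=\delta^2$, the observation that the finitely many top bins only cost an extra factor $\delta^N/M$, and the padding by a set $\II$ of value-one elements when the top cardinalities disagree. But the core of the lemma --- actually constructing a bounded-displacement bijection for arbitrary cardinalities --- is exactly what you leave unproved: you say yourself that the greedy sweep ``and its correctness for arbitrary cardinals, is where the real work lies,'' and the sketch you give does not survive scrutiny. The obstruction is not the carried $\TT$-surplus you describe, but the rationing of $\SS$-capacity, which for infinite bins is not determined by cardinality bookkeeping: take $\TT_0,\TT_1,\SS_0$ countably infinite and every other bin empty. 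Both families of interval inequalities hold for all $k\in\bbZ$ (every nonempty union involved has cardinality $\aleph_0$ on both sides), and a displacement-one bijection exists (split $\SS_0$ into two infinite halves, one for $\TT_0$ and one for $\TT_1$), yet a left-to-right greedy step may legitimately match $\TT_0$ onto all of $\SS_0$, stranding $\TT_1$ with nothing. Since cardinal subtraction is undefined, ``how much of $\SS_0$ remains'' is not even a meaningful state variable for the sweep; the greedy rule needs lookahead or reservation, and nothing in your proposal supplies it.

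The paper avoids a one-pass construction altogether. It builds two \emph{injections} of displacement at most one: $\phi$ defined on the tail $\TT'$ of $\TT$ and $\psi$ defined on the tail $\SS'$ of $\SS$. For bins $\TT_j$ whose three neighboring bins $\SS_{j-1},\SS_j,\SS_{j+1}$ are all finite, $\phi$ comes from Hall's marriage lemma, which is valid here despite the infinite index set because each allowed target set $p(t)\subset\SS_{j-1}\cup\SS_j\cup\SS_{j+1}$ is finite; for a bin $\TT_j$ with an infinite neighbor $\SS_{q_j}$, one splits $\SS_{q_j}$ into three disjoint pieces of the same cardinality and reserves one piece for each of the at most three bins that can point to it --- precisely the reservation your greedy lacks. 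The two injections are then merged into a bijection by the Cantor--Bernstein fixed-point argument, which simultaneously isolates leftover sets $F_3\subset\TT\setminus\TT'$, $G_3\subset\SS\setminus\SS'$ inside the top region and yields cases (i)--(iii) according to whether $\#F_3=\#G_3$, $\#F_3<\#G_3$, or $\#F_3>\#G_3$. (A minor slip in your write-up: your trichotomy assigns the cases incorrectly; equality of the leftover cardinals gives case (i), not (ii).) Replacing your greedy sweep by this ``two injections plus Cantor--Bernstein'' scheme is not a repair of detail but the missing central idea.
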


\begin{proof}
Denote, for $j\in\bbZ$, $\TT_j=\tau^{-1}([\delta^{j+1}, \delta^j))$ and $\SS_j=\tau^{-1}([\delta^{j+1}, \delta^j))$. Relations~\eqref{eq:combinatorial lemma1} and~\eqref{eq:combinatorial lemma2} say that, for $k\ge N$, 
\[
\begin{split}
\#(\bigcup_{j=k}^{k+\ell-1}\TT_j)\le \#(\bigcup_{j=k-1}^{k+\ell}\SS_j),\\
\#(\bigcup_{j=k}^{k+\ell-1}\SS_j)\le \#(\bigcup_{j=k-1}^{k+\ell}\TT_j).
\end{split}
\]
It is easy to see that one may extend these inequalities to nonconsecutive values of~$j$; that is,
\begin{align}
\#(\bigcup_{i=1}^I \TT_{j_i}) \le \#(\bigcup_{i=1}^I (\SS_{j_i-1}\cup\SS_{j_i}\cup\SS_{j_i+1}),\label{eq:comb2-1}\\
\#(\bigcup_{i=1}^I \SS_{j_i}) \le \#(\bigcup_{j=1}^J (\TT_{j_i-1}\cup\TT_{j_i}\cup\SS_{j_i+1}).\label{eq:comb2-2},
\end{align}
as long as all $j_i\ge N$.

Denote  
\[
\begin{split}
\TT'&=\bigcup_{j=N}^\infty \TT_j,\\
\TT''&=\bigcup \{\TT_j: j\ge N,\  \SS_{j-1}\cup \SS_j\cup\SS_{j+1}\text{ finite}\},\\
\tilde\SS&=\bigcup \{\SS_j: \SS_j\text{ finite}\}.
\end{split}
\]
We will define a map $p:\TT'\to \PP(\SS)$ with the property that, if $t\in\TT_j$, then $p(t)$ is a \emph{finite} subset of $\SS_{j-1}\cup \SS_j\cup\SS_{j+1}$.  If $t\in\TT_j\subset\TT''$, we put simply $p(t)=\SS_{j-1}\cup \SS_j\cup\SS_{j+1}$. 

If $t\in\TT'\setminus \TT''$ we  use a different procedure.
We start by defining an injection $p':\TT'\setminus \TT''\to \SS\setminus \tilde \SS$, as follows.
If $\TT_j\subset \TT'\setminus \TT''$, then at least one of $\SS_{j-1}, \SS_{j}, \SS_{j+1}$, say $\SS_{q_j}$, is infinite, and 
it satisfies $\#\SS_{q_j}\ge \#\TT_{j}$ (by~\eqref{eq:combinatorial lemma1} with $k=j$, $\ell=1$). Moreover, such an $\SS_{q_j}$ may be written as the disjoint union of three subsets of the same cardinality; since a given $\SS_{q_j}$ is associated with at most three $\TT_j$s, we may define injections $p'_j:\TT_j\to\SS_{q_j}$ with disjoint ranges, and glue them together to obtain the desired~$p'$.
Finally,  we define $p:\TT'\setminus \TT''\to \PP(\SS)$ by $p(t)=\{p'(t)\}$ (the one-element subset). Note that in the end we have $p(t)\subset \tilde \SS$
for $t\in\TT''$ 
   and $p(t)\subset \SS\setminus \tilde \SS$ for  $t\in\TT'\setminus \TT''$.

We claim that for any choice of $t_1,\dots, t_m$ the union of the finite sets $p(t_1),\dots, p(t_m)$ contains at least $m$ elements. Indeed,  if all $t_i$ belong to $\TT''$ this follows from~\eqref{eq:comb2-1}, while   if all $t_i$ belong to $\TT'\setminus\TT''$ it is a consequence of the injectivity of the function $p'$. From here follows  the general case, since   $p(t)\subset \tilde \SS$ for $t\in\TT''$ and $p(t)\subset \SS\setminus \tilde \SS$ for  $t\in\TT'\setminus \TT''$.

We may then apply Hall's marriage lemma (see, for instance,~\cite{Ry}) to obtain an injective function $\phi:\TT'\to\SS$ with the property that $\phi(t)\in p(t)$ for all $t\in\TT'$. Since $p(t)\subset\SS_{j-1}\cup \SS_j\cup\SS_{j+1}$ for $t\in\TT_j$, we have
\begin{equation}\label{eq:good bounds for phi}
\delta^2\le \frac{\tau(t)}{\sigma(\phi(t))} \le \frac{1}{\delta^2}
\end{equation}
for all $t\in\TT'$.

A similar argument, using~\eqref{eq:comb2-2}, can be used to produce, if $\SS'=\SS\setminus \SS_0$, an injective function $\psi:\SS'\to\TT$ such that for all $s\in\SS'$ we have
\begin{equation}\label{eq:good bounds for psi}
\delta^2\le \frac{\tau(\psi(s))}{\sigma(s)} \le \frac{1}{\delta^2}
\end{equation}
for all $s\in\SS'$.

Define then $\Phi:\PP(\TT)\to \PP(\TT)$ by the formula
\[
\Phi(E)= \TT\setminus \psi \Big[ \big(\SS \setminus \phi(E\cap \TT')\big)\cap \SS' \Big].
\]
$\Phi$ is then a monotone map, which has a fixed point by the classical Cantor--Bernstein argument; we will denote this fixed point by $E_0\subset \TT$. Consider the two partitions $\TT=F_1\cup F_2\cup F_3$, $\SS=G_1\cup G_2\cup G_3$, where
\[
\begin{split}
&F_1=\TT\setminus E_0, \quad F_2=E_0\cap \TT', \quad F_3= E_0\setminus \TT',\\
&G_1=\big(\SS \setminus \phi(E_0\cap \TT')\big)\cap \SS',
\quad G_2= \phi(E_0\cap \TT'), \quad G_3=\SS\setminus(G_1\cup G_2) .
\end{split}
\]
Then $\psi^{-1}$ is one-to-one from $F_1$ onto $G_1$, $\phi$ is one-to-one from $F_2$ onto $G_2$, while $F_3\subset \TT\setminus\TT'$, $ G_3\subset \SS\setminus \SS'$. Denote then $\eta_0:F_1\cup F_2\to G_1\cup G_2$ by $\eta_0(t)=\psi^{-1}(t)$ if $t\in F_1$ and $\eta_0(t)=\phi(t)$ if $t\in F_2$. From~\eqref{eq:good bounds for phi} and~\eqref{eq:good bounds for psi} it follows that
\begin{equation}\label{eq:good bounds for eta_0}
\delta^2\le \frac{\tau(t)}{\sigma(\eta_0(t))}\le \frac{1}{\delta^2} \text{ for }t\in F_1\cup F_2.
\end{equation}

This is just the desired relation for $\delta'=\delta^2$ and $t\in F_1\cup F_2$. On the other hand, for any $t\in F_3$, $s\in G_3$ we have 
\begin{equation}\label{eq:good bounds on F3G3}
\frac{\delta^N}{M}\le \frac{\tau(t)}{\sigma(s)} \le \frac{M}{\delta^N}
\end{equation}
since   $F_3\subset \TT\setminus\TT'$ and $ G_3\subset \SS\setminus \SS'$ imply $\tau(t), \sigma(s)\in[\delta^N,M]$.

Take then $\delta'=\min\{\delta^2, \delta^N/M\}$. If $\#F_3=\# G_3$, we may extend $\eta_0$ to a bijection $\eta:\TT\to \SS$, and the desired inequalities~\eqref{eq:delta' inequalities} are true    by~\eqref{eq:good bounds for eta_0} and~\eqref{eq:good bounds on F3G3}.

If $F_3$ and $G_3$ do not have the same cardinality, say  $\#F_3<\#G_3$, we take $\II$ such that   $\#(F_3\cup\II)=\#G_3$. We may then extend $\eta_0$ to a bijection $\eta: \TT\cup \II\to \SS$ that satisfies $\eta(F_3\cup\II)=G_3$, and again~\eqref{eq:delta' inequalities} is satisfied. A similar argument works if $\#F_3>\#G_3$, leading to case (iii) from the statement of the theorem.

We have thus proved (1). For (2), we notice that if~\eqref{eq:combinatorial lemma1} and~\eqref{eq:combinatorial lemma2} are true for all $k\in\bbZ$, then we may define $p$ as above on the whole of $\TT$ instead of only on $\TT'$; consequently, we obtain an injective function $\phi:\TT\to\SS$. Similarly, we get $\psi:\SS\to\TT$; then the usual Cantor--Bernstein argument is used, leading to $F_3=G_3=\emptyset$ and $\eta_0=\eta$. So no extension of $\eta$ is needed, and~\eqref{eq:delta' inequalities} is satisfied.
\end{proof}

\section{Strong equivalence and equivalence after extension}\label{se:strong equivalence}

The discussion in this section of strong equivalence and equivalence after extension is mostly  based on~\cite{FW}.

\begin{lemma}\label{le:simple reduction to selfadjoints}
If $T,S$ are two linear operators, then the following are equivalent:
\begin{itemize}
\item
[(i)] 
$T\sim S$.
\item
[(ii)] $\dim\ker T=\dim\ker S$ and there exists a unitary operator $U$ such that $U(\ran T)=\ran S$.

\item
[(iii)] Conditions~\eqref{eq:index} are satisfied, and the restriction of $|T|$ to $ \supp T$ is similar to the restriction of $  |S|$ to $ | \supp S$.
\item
[(iv)] Conditions~\eqref{eq:index} are satisfied, and  there exists a unitary operator $V$ such that $V(\ran|T|)=\ran|S|$.

\end{itemize}
\end{lemma}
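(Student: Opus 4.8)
My plan is to split the four conditions into the pair (ii), (iv)---phrased with unitaries---and the pair (i), (iii)---phrased with invertibles, reading ``similar'' in (iii) as the relation $\sim$ of (E) applied to the restrictions of $|T|$ and $|S|$ to $\supp T$ and $\supp S$ (these are injective with dense range)---and to pass between the two pairs through the polar decomposition $T=W_T|T|$. I would use throughout that $\ker T=\ker|T|$, that $W_T$ restricts to a unitary of $\supp T=\overline{\ran|T|}$ onto $\overline{\ran T}$ with $\ran T=W_T(\ran|T|)$, and that $\ran T=\ran A$ for the positive operator $A:=(TT^*)^{1/2}$.

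I would first treat (ii)$\Leftrightarrow$(iv) as bookkeeping: a unitary $U$ with $U(\ran T)=\ran S$ satisfies $U(\overline{\ran T})=\overline{\ran S}$, hence also matches the orthogonal complements, and $V_0:=W_S^*UW_T$ is a unitary of $\supp T$ onto $\supp S$ carrying $\ran|T|$ onto $\ran|S|$; extending $V_0$ by a unitary $\ker T\to\ker S$ (available since $\dim\ker T=\dim\ker S$) yields (iv), and the reverse is the same computation. The two ``downhill'' implications (ii)$\Rightarrow$(i) and (iv)$\Rightarrow$(iii) then reduce to one elementary lemma: if $\ran T=\ran S$ and $\dim\ker T=\dim\ker S$, then $T\sim S$. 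This holds because equal operator ranges carry equivalent range norms (the closed graph theorem applied to the two Hilbert-space structures $\|Tx\|_R=\dist(x,\ker T)$ and its analogue for $S$), so that the bounded invertible identification of these norms, glued with a unitary $\ker T\to\ker S$, produces an invertible $V$ with $T=SV$. For (ii)$\Rightarrow$(i) I would first conjugate $S$ by $U$ to reach the equal-range case; for (iv)$\Rightarrow$(iii) I would apply the lemma to $V_0AV_0^*$ and $B$, where $A,B$ are the restrictions of $|T|,|S|$ to their supports.

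The heart of the matter is the single ``uphill'' implication (i)$\Rightarrow$(ii); the remaining arrow (iii)$\Rightarrow$(iv) then follows by running it on the positive parts $A,B$ and re-attaching kernels as above. From $S=UTV$ one obtains both equalities of \eqref{eq:index} (by Lemma~\ref{le:immediate remarks equivalence}(iv)) and $\ran S=U(\ran T)$ with $U$ merely invertible; the task is to replace this invertible matching of ranges by a unitary one, i.e. to find a unitary $V_0:\overline{\ran A}\to\overline{\ran B}$ with $V_0(\ran A)=\ran B$, where $B:=(SS^*)^{1/2}$. Here invertibility is decisive: one has $\ran B=U(\ran A)=\ran C$ for the positive operator $C:=(UA^2U^*)^{1/2}$, and since $U$ is invertible $C$ has singular values within a fixed factor of those of $A$, while, sharing its range with $B$, it has by Douglas factorization singular values comparable to those of $B$ as well; hence the counting functions $N_A(t)=\dim E_A((t,\infty))$ and $N_B$ agree up to a bounded rescaling of the threshold. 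Choosing $\delta$ small enough that a single spectral band absorbs this distortion, the band ranks $\rank E_A([\delta^{n+1},\delta^n))$ and $\rank E_B([\delta^{n+1},\delta^n))$, encoded as the functions $\tau,\sigma$ of Lemma~\ref{le:combinatorics}, satisfy inequalities \eqref{eq:combinatorial lemma1}--\eqref{eq:combinatorial lemma2} for all $k\in\bbZ$, so Lemma~\ref{le:combinatorics}(2) produces a bijection of the bands respecting the scales up to a bounded factor; matching orthonormal bases of the paired bands assembles $V_0$, and Lemma~\ref{le:range of selfadjoint} certifies that $V_0(\ran A)=\ran B$. Extending $V_0$ across the complements, equidimensional by the cokernel equality in \eqref{eq:index}, gives the unitary of (ii). The main obstacle is precisely this conversion of comparable spectral scales into an honest bijection of spectral bands: it is the one step that genuinely needs the Hilbert-space spectral calculus, and it is exactly what Lemma~\ref{le:combinatorics} is designed to supply.
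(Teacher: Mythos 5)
Your handling of the easy arrows is correct: reading ``similar'' in (iii) as strong equivalence (as the paper's Theorem~\ref{th:equivalence in general}(iii) confirms is intended), the bookkeeping (ii)$\Leftrightarrow$(iv) through the polar partial isometries, and the reduction of (ii)$\Rightarrow$(i) and (iv)$\Rightarrow$(iii) to the closed-graph lemma ``equal ranges plus equal kernel dimensions imply $\sim$'' all work, and together they constitute what the paper compresses into the phrase ``simple consequences of the polar decomposition''. The genuine gap is in the arrow you yourself call the heart, (i)$\Rightarrow$(ii), at the step where you pass from comparability of the counting functions $N_A(t)=\dim E_A((t,\infty))$, $N_B(t)$ to the band inequalities~\eqref{eq:combinatorial lemma1}--\eqref{eq:combinatorial lemma2}. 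That implication is false, and no choice of $\delta$ repairs it: half-line spectral dimensions do not control band dimensions. For instance, let $A$ have eigenvalues $2^{-n}$, $n\ge 0$, each of infinite multiplicity, and let $B$ have eigenvalue $1$ of infinite multiplicity and eigenvalues $2^{-n}$, $n\ge 1$, of multiplicity one: then $N_A=N_B$ identically, yet for every $0<\delta<1$ and all large $n$ one has $\dim E_A([2^{-n-1},2^{-n}))=\aleph_0$ while $\dim E_B([\delta 2^{-n-1},2^{-n}/\delta))<\infty$, so~\eqref{eq:se1} fails for every $\delta$. (This pair does not arise from your $U$, but that is exactly the point: the only information you carry into the final step --- counting-function comparability --- is strictly weaker than what Lemma~\ref{le:combinatorics} needs as input.) The same loss already infects your step (a): ``singular values'' is not well defined for such operators, and the congruence $C^2=UA^2U^*$, or the two-sided inequality $c^{-2}A^2\le (UA)^*(UA)\le c^2A^2$ it yields, gives by naive min--max arguments only half-line comparability; a quadratic form pinched between $\alpha\|x\|^2$ and $\beta\|x\|^2$ on a large subspace says nothing about $\dim E([\alpha',\beta'))$, as a projection shows (its form equals $\tfrac12\|x\|^2$ on a subspace of full dimension while its spectral measure of $[\tfrac14,\tfrac34]$ is $0$).

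What is missing is precisely the band-level statement that the paper imports as Lemma~\ref{le:equivalence of selfadjoints} (that is, \cite[Lemma 3.2]{FW}): if the ranges of two positive operators are matched by a unitary --- in particular, if they are equal --- then condition $(\mathfrak{S})$ holds, i.e.\ the two-sided interval inequalities~\eqref{eq:se1}--\eqref{eq:se2}. With that lemma your plan closes non-circularly: $\ran C=\ran B$ gives $(\mathfrak{S})$ for $(C,B)$ by Lemma~\ref{le:equivalence of selfadjoints} with $V=I$; writing $C=|(UA)^*|$, using that $|X|$ and $|X^*|$ are unitarily equivalent on their supports, and noting $\ran|UA|=\ran A$ (Douglas, from $c^{-2}A^2\le |UA|^2\le c^2A^2$), gives $(\mathfrak{S})$ for $(C,A)$ as well; chaining the two conditions yields~\eqref{eq:combinatorial lemma1}--\eqref{eq:combinatorial lemma2} for all $k\in\bbZ$, after which Lemma~\ref{le:combinatorics}(2), Lemma~\ref{le:range of selfadjoint} and the kernel count from Lemma~\ref{le:immediate remarks equivalence}(iv) finish exactly as you describe. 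Note, though, that this repair consists in invoking the Fillmore--Williams lemma whose content you tried to shortcut through counting functions; the paper itself sidesteps the entire construction by quoting \cite[Theorem 3.4]{FW} for (i)$\Leftrightarrow$(ii) and using the polar decomposition for the rest.
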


\begin{proof}
The equivalence of (i) and (ii) is~\cite[Theorem 3.4]{FW}. The equivalences (i) $\Leftrightarrow$(iii) and (ii)$\Leftrightarrow$(iii) are simple consequences of the polar decomposition of $T$ and~$S$.
\end{proof}

The previous lemma essentially reduces the problem of strong equivalence to the case of positive operators: we have to know when their ranges can be mapped one onto the other by means of a unitary operator. This is expressed in terms of  their respective spectral measures in the next result, for which we need first a definition.

\begin{definition}\label{de:condition S}
Two positive operators $A\in\LL(X)$, $B\in\LL(Y)$ \emph{satisfy condition $(\mathfrak{S})$} if
there exists $0<\delta<1$ such that for any $0<\alpha\le \beta<\infty$ we have
\begin{align}
\dim E_A([\alpha,\beta))X&\le \dim E_B(\big[\delta\alpha, \frac{1}{\delta}\beta\big))Y\label{eq:se1},\\
\dim E_B([\alpha,\beta))Y&\le \dim E_A(\big[\delta\alpha, \frac{1}{\delta}\beta\big))X\label{eq:se2}.
\end{align}
\end{definition}

The next lemma follows from~\cite[Lemma 3.2]{FW}. 

\begin{lemma}\label{le:equivalence of selfadjoints}
Suppose $A,B$ are positive. If there exists a unitary operator $V$ such that $V(\ran A)=\ran B$ (in particular, if $A\sim B$), then
$A,B$ satisfy condition $(\mathfrak{S})$. 
 \end{lemma}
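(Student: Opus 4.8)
The plan is to extract from the set equality $V(\ran A)=\ran B$ a quantitative comparison of the two operator-range norms, and then to read off condition $(\mathfrak S)$ from a comparison of the high-spectrum parts of $A$ and $B$. The parenthetical case reduces to the stated hypothesis, since by Lemma~\ref{le:simple reduction to selfadjoints} a strong equivalence between the positive operators $A,B$ already yields a unitary $V$ with $V(\ran A)=\ran B$. As condition $(\mathfrak S)$ only involves $E_A([\alpha,\beta))$, $E_B([\alpha,\beta))$ with $\alpha>0$, which act inside $\supp A$ and $\supp B$, I would first restrict $A$ and $B$ to their supports and so assume them injective, noting that $V$ then carries $\supp A=\overline{\ran A}$ onto $\supp B$. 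I would equip $\ran A$ with the operator-range norm $\|x\|_A=\|A^{-1}x\|$ (with $A^{-1}$ the inverse on $\supp A$); under it $\ran A$ is a Hilbert space continuously embedded in $X$, and similarly for $\ran B$. The map $V$ is then a linear bijection of $(\ran A,\|\cdot\|_A)$ onto $(\ran B,\|\cdot\|_B)$; since both inclusions into the ambient spaces are continuous and $V$ is bounded, its graph is closed, and the closed graph theorem produces a constant $c\ge 1$ with
\[
c^{-1}\|A^{-1}x\|\le\|B^{-1}Vx\|\le c\|A^{-1}x\|\qquad(x\in\ran A).
\]
This norm equivalence, which is the quantitative form of the coincidence of the two ranges underlying~\cite[Lemma 3.2]{FW}, is the only consequence of $V(\ran A)=\ran B$ that I would use, and the argument is dimension-free.

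Next, fixing any $\delta$ with $0<\delta<1/c$, I would compare the high-spectrum parts. If $v\in E_A((\mu,\infty))X$ then $\|A^{-1}v\|\le\mu^{-1}\|v\|$, so $\|B^{-1}Vv\|\le(c/\mu)\|Vv\|$. I claim that $E_B((\delta\mu,\infty))$ is injective on $V\big(E_A((\mu,\infty))X\big)$: a vector $w$ in this image annihilated by the projection has $B$-spectrum in $[0,\delta\mu]$, whence $\|B^{-1}w\|\ge(\delta\mu)^{-1}\|w\|>(c/\mu)\|w\|$ (as $1/\delta>c$), contradicting the previous bound unless $w=0$. Hence
\[
\dim E_A((\mu,\infty))X\le\dim E_B((\delta\mu,\infty))Y\qquad(\mu>0),
\]
and interchanging the roles of $A$ and $B$ through $V^{-1}$ gives the symmetric inequality $\dim E_B((\mu,\infty))Y\le\dim E_A((\delta\mu,\infty))X$ as well.

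Finally I would assemble \eqref{eq:se1} and \eqref{eq:se2} from these one-sided comparisons. Writing $\dim E_A([\alpha,\beta))=\dim E_A([\alpha,\infty))-\dim E_A([\beta,\infty))$, I would bound the first term using the comparison at $\mu=\alpha$, namely $\dim E_A([\alpha,\infty))\le\dim E_B([\delta\alpha,\infty))$, and bound the second from below using the symmetric comparison at $\mu=\beta/\delta$, namely $\dim E_B([\beta/\delta,\infty))\le\dim E_A([\beta,\infty))$; subtracting then yields $\dim E_A([\alpha,\beta))\le\dim E_B([\delta\alpha,\beta/\delta))Y$, which is \eqref{eq:se1}, and \eqref{eq:se2} follows symmetrically. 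The open/closed discrepancy at the endpoints is harmless, since $\delta$ was taken strictly below $1/c$ and the thresholds may be nudged slightly without affecting the inequalities.

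I expect the genuine difficulty to lie precisely in this last assembly. The one-sided comparison works because a vector with small $\|B^{-1}\cdot\|$ must concentrate near the top of the spectrum, so injectivity of a single tail projection is available; but for a two-sided window this mechanism fails, as a vector of $V\big(E_A([\alpha,\beta))X\big)$ may spread across both the low and the high spectral tails of $B$ and so survive the window projection. No direct injection onto the window is therefore possible, which is what forces the cumulative (subtractive) bookkeeping above. The one point requiring real care is that this subtraction must be justified when the spectral subspaces involved are infinite-dimensional; this is exactly the bookkeeping carried out in~\cite[Lemma 3.2]{FW}, to which the remainder of the argument can be referred.
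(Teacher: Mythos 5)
Your first two steps are correct: the closed-graph argument yielding the two-sided norm equivalence $c^{-1}\|A^{-1}x\|\le\|B^{-1}Vx\|\le c\|A^{-1}x\|$ on $\ran A$ is sound, and so is the deduction of the one-sided tail comparisons $\dim E_A((\mu,\infty))X\le\dim E_B((\delta\mu,\infty))Y$ (and its symmetric counterpart) for all $\mu>0$. The gap is in the final assembly, and it is not a matter of ``bookkeeping'': it cannot be repaired from what you have at that point. First, cardinal subtraction is invalid: from $a_1+a_2\le b_1+b_2$ and $b_2\le a_2$ one cannot conclude $a_1\le b_1$ when the cardinals are infinite (take $a_1=a_2=b_2=\aleph_0$, $b_1=0$), and the identity $\dim E_A([\alpha,\beta))=\dim E_A([\alpha,\infty))-\dim E_A([\beta,\infty))$ is meaningless when both tails are infinite-dimensional. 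Second, and more fundamentally, the one-sided tail conditions you have established are \emph{strictly weaker} than condition $(\mathfrak{S})$, so no argument using only them can finish the proof. Concretely, let $A=I_{\ell^2}\oplus\mathrm{diag}(2^{-n})_{n\ge1}$ and $B=I_{\ell^2}\oplus\mathrm{diag}(2^{-2^n})_{n\ge1}$. For every $0<\delta<1$ and every $\mu>0$ both one-sided inequalities hold trivially: for $\mu\ge1$ the left-hand sides vanish, while for $\mu<1$ both sides equal $\aleph_0$ because each tail contains the identity summand. Yet $(\mathfrak{S})$ fails for every $\delta$: for $k>\log_2(1/\delta)$ the window $[2^{-(k+m)},2^{-k})$ contains $m$ eigenvalues of $A$, whereas the expanded window $[\delta 2^{-(k+m)},2^{-k}/\delta)$ contains only those eigenvalues $2^{-2^n}$ of $B$ with $2^n\le k+m+\log_2(1/\delta)$, i.e.\ at most $O(\log(k+m))$ of them, so \eqref{eq:se1} is violated for large $m$.

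This shows that the difficulty you yourself flagged---a vector of $V\bigl(E_A([\alpha,\beta))X\bigr)$ may spread over both spectral tails of $B$, so that no tail projection is injective on it---is not a technical footnote to be delegated; it is the entire content of the lemma, and your reduction discards exactly the information (the norm equivalence applied to the window itself) needed to overcome it. Citing \cite[Lemma 3.2]{FW} at that point does not help, because what you would be invoking is the false subtraction implication above, not what Fillmore and Williams prove: their lemma establishes the windowed comparisons \eqref{eq:se1}--\eqref{eq:se2} directly from the range equality, which is why the paper's own proof consists precisely of that citation rather than a derivation from tail estimates. A correct self-contained proof must keep the norm equivalence in play when comparing windows---for $v$ in a spectral window of $A$ one has to control the low-tail component of $Vv$ (which your estimate does handle) \emph{and} the high-tail component (which it does not), using the equivalence in both directions; this is where the real work of the lemma lies.
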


To deal with equivalence after extension, we 
  formulate for positive operators another condition, less stringent that $(\mathfrak{S})$.

\begin{definition}\label{de:condition tilde S}
Two positive operators $A\in\LL(X)$, $B\in\LL(Y)$ \emph{satisfy condition $(\widetilde{\mathfrak{S}})$} if
there exists $0<\delta<1$ and $a>0$ such that for any $0<\alpha<\beta\le a$ we have
\begin{align}
\dim E_A([\alpha,\beta))&\le \dim E_B(\big[\delta\alpha, \frac{1}{\delta}\beta\big))\label{eq:see1}\\
\dim E_B([\alpha,\beta))&\le \dim E_A(\big[\delta\alpha, \frac{1}{\delta}\beta\big))\label{eq:see2}
\end{align}

\end{definition}

The difference between $({\mathfrak{S}})$ and $(\widetilde{\mathfrak{S}})$ is that the latter involves only  spectral projections   supported on $(0,a/\delta)$. As a consequence, we have the following result:

\begin{lemma}\label{le:s and tilde s}
Suppose $A\in\LL(X)$, $B\in\LL(Y)$ are two positive operators. The following are equivalent:
\begin{itemize}
\item
[(i)] $A,B$ satisfy condition $(\widetilde{\mathfrak{S}})$.

\item[(ii)]
$A\oplus I_Y$ and $B\oplus I_X$ satisfy condition $(\mathfrak{S})$.
\end{itemize}

\end{lemma}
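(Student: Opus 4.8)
The plan is to reduce the comparison to a bookkeeping of spectral projections. Since both $(\mathfrak{S})$ and $(\widetilde{\mathfrak{S}})$ only ever test projections $E_\cdot([\alpha,\beta))$ with $\alpha>0$, neither condition sees the kernels of the operators; hence I may replace $A,B$ by their restrictions to $\supp A,\supp B$ and assume throughout that $A,B$ are injective. For $\alpha>0$ there is an orthogonal splitting $E_{A\oplus I_Y}([\alpha,\beta))=E_A([\alpha,\beta))\oplus E_{I_Y}([\alpha,\beta))$, whose second summand is all of $Y$ when $1\in[\alpha,\beta)$ and is $\{0\}$ otherwise, so that
\[
\dim E_{A\oplus I_Y}([\alpha,\beta)) = \dim E_A([\alpha,\beta))X + \begin{cases}\dim Y, & 1\in[\alpha,\beta),\\ 0, & \text{otherwise},\end{cases}
\]
and symmetrically for $B\oplus I_X$ with $\dim X$. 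The whole argument is manipulation of these two formulas, the point being that the identity ``atom'' at the value $1$ supplies a reservoir of dimension $\dim Y$ (resp. $\dim X$) exactly on those intervals that straddle $1$.

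For the direction (ii)\Implies(i) I would take the $\delta$ furnished by $(\mathfrak{S})$ for the amplifications and set $a=\delta$. Then for $0<\alpha<\beta\le a$ both $[\alpha,\beta)$ and its dilate $[\delta\alpha,\beta/\delta)$ lie in $(0,1)$ (the right endpoint of the dilate is $\beta/\delta\le 1$), so no identity atom is triggered and the two amplified inequalities collapse to the bare inequalities $\dim E_A([\alpha,\beta))X\le\dim E_B([\delta\alpha,\beta/\delta))Y$ and its mirror; these are precisely $(\widetilde{\mathfrak{S}})$ with the same $\delta$ and $a=\delta$.

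The substance is the direction (i)\Implies(ii). Given $\delta,a$ from $(\widetilde{\mathfrak{S}})$ (and $a<1$ without loss of generality, since shrinking $a$ only weakens the hypothesis), set $M=\max(\|A\|,\|B\|,1)$ and pick $\delta'$ small; I verify $(\mathfrak{S})$ for the amplifications by locating $[\alpha,\beta)$. If $\beta\le a$, then $[\alpha,\beta)\subset(0,1)$ carries no left atom, while any right atom only helps; taking $\delta'\le\delta$, condition $(\widetilde{\mathfrak{S}})$ together with monotonicity of $E_B$ in the dilate gives the inequality. If $\beta>a$ and $\alpha\le M$, then for $\delta'<1/M$ the dilate $[\delta'\alpha,\beta/\delta')$ has endpoints $\delta'\alpha<1<a/\delta'\le\beta/\delta'$ and hence contains $1$, so the right-hand side already carries the reservoir $\dim X$; when in addition $1\notin[\alpha,\beta)$ the left-hand side is merely $\dim E_A([\alpha,\beta))X\le\dim X\le$ RHS, and when $\alpha>M$ the left-hand side vanishes.

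The crux is the case $1\in[\alpha,\beta)$, where the left-hand side is $\dim E_A([\alpha,\beta))X+\dim Y$ with $\dim E_A([\alpha,\beta))X\le\dim X$, and the right-hand side is $\dim E_B([\delta'\alpha,\beta/\delta'))Y+\dim X$. Since $\alpha\le 1<\beta$, the dilate contains $[\delta',M]\supseteq[\delta',\|B\|]$ and so captures every spectral value of $B$ that is $\ge\delta'$; the only part it can miss is $E_B((0,\delta'))$, and this is exactly the part controlled by $(\widetilde{\mathfrak{S}})$. If $\dim X$ is infinite, crude cardinal arithmetic ($2\dim X=\dim X$ and $\dim Y\le\dim X+\dim E_B([\delta'\alpha,\beta/\delta'))Y$, using $(\widetilde{\mathfrak{S}})$ to bound $\dim E_B((0,\delta'))Y\le\dim X$) closes the estimate; if $\dim X$ is finite, then $(\widetilde{\mathfrak{S}})$ forces $\dim E_B((0,a))Y\le\dim X<\infty$, so $B$ has only finitely many spectral values in $(0,a)$ and its positive spectrum is bounded below by some $m_B>0$, and choosing $\delta'<m_B$ makes the dilate capture all of $\supp B$, giving $\dim E_B([\delta'\alpha,\beta/\delta'))Y=\dim Y$ and hence RHS $=\dim Y+\dim X\ge$ LHS. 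The mirror inequality is identical with $X,Y$ and $A,B$ interchanged. I expect this crux case to be the main obstacle: matching the reservoir $\dim Y$ from $I_Y$ against $\dim X$ from $I_X$ plus the captured spectrum of $B$ requires both the observation that the dilate is wide enough to swallow all of $B$'s spectrum above $\delta'$ and a careful split into the finite- and infinite-dimensional regimes, and it is the injective reduction at the outset that rules out the kernel mismatch which would otherwise defeat the statement.
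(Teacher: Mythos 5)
Your (ii)\Implies(i) argument is correct and is exactly the paper's: with $a=\delta$, both the test interval and its dilate stay inside $(0,1)$, where the identity atoms are invisible. The genuine gap is the very first step of (i)\Implies(ii), the ``injective reduction''. It is true that $(\widetilde{\mathfrak{S}})$ does not see kernels; but statement (ii) does, because $\ker B$ and $\ker A$ reappear inside the \emph{identity} summands: the reservoir created by $I_Y$ at the spectral value $1$ has dimension $\dim Y=\dim\ker B+\dim\supp B$, and likewise for $I_X$. Replacing $A,B$ by their restrictions to $\supp A,\supp B$ shrinks these reservoirs and so changes statement (ii), precisely in your crux case $1\in[\alpha,\beta)$. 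The step is not just unjustified but unrepairable, because the lemma as stated fails when the kernels do not match: take $A=\mathrm{diag}(1,0)$ on $X=\bbC^2$ and $B=(1)$ on $Y=\bbC$. Then $(\widetilde{\mathfrak{S}})$ holds vacuously (for $\delta=a=\tfrac12$ every projection occurring in \eqref{eq:see1}--\eqref{eq:see2} is $0$), but $A\oplus I_Y=\mathrm{diag}(1,0,1)$ and $B\oplus I_X=I_{\bbC^3}$ do not satisfy $(\mathfrak{S})$: for any $\delta$, the interval $[\alpha,\beta)=[1,2)$ in \eqref{eq:se2} would force $3\le 2$, since the dilated interval never catches the eigenvalue $0$ of $A\oplus I_Y$. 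Your reduction turns this false instance into a trivially true one, so nothing done after it can be sound.

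To be fair, the defect lies in the lemma as much as in your proof: the statement becomes true under your injectivity assumption, or more generally under the kernel condition $\dim\ker A=\dim\ker B$, and these are the only situations the paper ever relies on (only the direction (ii)\Implies(i) is invoked in Theorem~\ref{th:main}, and where $(\widetilde{\mathfrak{S}})$ is actually converted into an equivalence, in Lemma~\ref{le:main argument}, equality of kernel dimensions is a standing hypothesis). Moreover, the paper's own proof of (i)\Implies(ii) is looser than yours at exactly the point you labored over: it asserts that for $\beta\ge a$ the right-hand spectral projection ``is the whole direct sum'', which ignores both $\ker B$ and the spectrum of $B$ below $\delta'\alpha$. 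Your crux-case analysis --- the dilate swallows $\sigma(B)\cap[\delta',\infty)$, the missed piece $E_B((0,\delta'))$ is majorized by $\dim X$ via $(\widetilde{\mathfrak{S}})$, then a finite/infinite split with cardinal arithmetic --- is precisely what is needed to prove the corrected statement rigorously. So the repair is to put injectivity (or $\dim\ker A=\dim\ker B$) into the hypotheses rather than ``reduce'' to it; with that change your argument is complete, and indeed more careful than the paper's.
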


\begin{proof}
Suppose $A,B$ satisfy condition $(\widetilde{\mathfrak{S}})$ with some $\delta, a$. We claim that $(\mathfrak{S})$ is true for $A\oplus I_Y$ and $B\oplus I_X$, replacing $\delta$ by $\delta'=\min \{\delta, \frac{a}{\|A\|}, \frac{a}{\|B\|}\}$. Indeed, by $(\widetilde{\mathfrak{S}})$, both conditions $(\mathfrak{S})$ are satisfied if $\beta<a$. But, if $\beta\ge a$, then the spectral projection in the right hand side in~\eqref{eq:se1} and~\eqref{eq:se2} is the whole direct sum, its dimension is  $\dim X+\dim Y$, and thus the inequalities are satisfied.

Conversely, suppose  $A\oplus I_Y$ and $B\oplus I_X$ satisfy condition $(\mathfrak{S})$. We keep the same value of $\delta$ and choose $a=\delta$. Then conditions~\eqref{eq:see1} and~\eqref{eq:see2} involve only spectral projections of sets contained in $(0,1)$, and these  are not influenced by a direct summand that is an identity operator. So  they are also satisfied by  $A, B$.
\end{proof}

\section{Equivalence after one-sided extension}\label{se:eoe}

This section contains the main result of the paper, namely the implication (EE)\Implies(EOE).  The basic result is the next lemma, which uses the construction of Lemma~\ref{le:combinatorics}.

\begin{lemma}\label{le:main argument}
Suppose $A\in\LL(X)$, $B\in\LL(Y)$ are positive operators with $\dim\ker A=\dim\ker B$. 
\begin{enumerate}
\item
If $A,B$ satisfy condition $(\widetilde{\mathfrak{S}})$, then $A$ and $B$ are equivalent after one-sided extension. 
\item
If $A,B$ satisfy condition $(\mathfrak{S})$, then $A\sim B$.
\end{enumerate}
\end{lemma}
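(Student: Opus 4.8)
The plan is to reduce everything, via Lemma~\ref{le:simple reduction to selfadjoints}, to the construction of a suitable unitary between the supports, and to extract that unitary from the combinatorial Lemma~\ref{le:combinatorics}. Fix $\delta$ (and, for part~(1), also $a$) as in the relevant condition, and set $M=\max\{1,\|A\|,\|B\|\}$. For $j\in\bbZ$ write $\EE^A_j=E_A([\delta^{j+1},\delta^j))X$ and $\EE^B_j=E_B([\delta^{j+1},\delta^j))Y$; these give orthogonal decompositions $\supp A=\bigoplus_j\EE^A_j$ and $\supp B=\bigoplus_j\EE^B_j$, with $\EE^A_j=\EE^B_j=\{0\}$ for $j$ sufficiently negative (the spectra lie in $(0,M]$). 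Choose orthonormal bases $\{e_t\}_{t\in T_j}$ of $\EE^A_j$ and $\{f_s\}_{s\in S_j}$ of $\EE^B_j$, put $\TT=\bigsqcup_j T_j$, $\SS=\bigsqcup_j S_j$, and define $\tau,\sigma$ by choosing $\tau(t)\in[\delta^{j+1},\delta^j)$ for $t\in T_j$ and similarly for $\sigma$, so that $\#\tau^{-1}([\delta^{j+1},\delta^j))=\dim\EE^A_j$ and likewise for $\sigma$.

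With $\alpha=\delta^{k+\ell}$ and $\beta=\delta^k$, the inequalities \eqref{eq:se1}--\eqref{eq:se2} defining $(\mathfrak{S})$ read exactly as \eqref{eq:combinatorial lemma1}--\eqref{eq:combinatorial lemma2} and hold for all $k\in\bbZ$; so part~(2) of Lemma~\ref{le:combinatorics} applies and yields a bijection $\eta:\TT\to\SS$ satisfying \eqref{eq:delta' inequalities}. For condition $(\widetilde{\mathfrak{S}})$ the same translation shows \eqref{eq:combinatorial lemma1}--\eqref{eq:combinatorial lemma2} hold for all $k\ge N$, where $N$ is chosen so that $\delta^N\le a$; part~(1) of Lemma~\ref{le:combinatorics} then produces one of its three alternatives.

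In every case I build a unitary $V_0$ from the resulting bijection by sending each $e_t$ to $f_{\eta(t)}$. In the alternatives (ii) and (iii) the extra index set $\II$ labels an orthonormal basis of a space $H$ on which $A$, respectively $B$, is extended by $I_H$, the added spectral value being $1$; thus $V_0$ goes between the supports of $A\oplus I_H$ and $B$, or of $A$ and $B\oplus I_H$. The crucial point is that \eqref{eq:delta' inequalities} forces $\eta$ to shift bands only a bounded amount: if $t\in T_j$ and $\eta(t)\in S_{j'}$, then $|j-j'|\le C$ for a constant $C=C(\delta,\delta')$. Consequently the weights $\delta^{-2j}$ attached to the bands by Lemma~\ref{le:range of selfadjoint} are comparable along $\eta$, so expanding $x=\sum_t c_t e_t$ and $V_0x=\sum_t c_t f_{\eta(t)}$ and comparing the two range criteria of Lemma~\ref{le:range of selfadjoint} gives $x\in\ran A\iff V_0x\in\ran B$; hence $V_0(\ran A)=\ran B$ (and similarly with the identity summands). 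Finally, since $\dim\ker A=\dim\ker B$ is unchanged by adjoining $I_H$, I extend $V_0$ by any unitary $\ker A\to\ker B$ to a unitary $V$ with $V(\ran A)=\ran B$; as $A$ is positive, $\ker A=\ker A^*$, so \eqref{eq:index} holds, and Lemma~\ref{le:simple reduction to selfadjoints}(iv) yields $A\sim B$ in case (i) and $A\oplus I_H\sim B$ or $A\sim B\oplus I_H$ in cases (ii),(iii)---that is, equivalence after one-sided extension.

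The routine inputs are the translation of the conditions into \eqref{eq:combinatorial lemma1}--\eqref{eq:combinatorial lemma2} and the bookkeeping of kernel dimensions. The step I expect to require the most care is the range-preservation argument: one must verify that a unitary which shifts spectral bands by a bounded amount carries $\ran A$ \emph{exactly} onto $\ran B$, and this rests on controlling the summability weights of Lemma~\ref{le:range of selfadjoint} uniformly, in particular across the threshold $j=0$ where those weights switch from being constrained to being unconstrained.
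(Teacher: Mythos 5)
Your proposal is correct and follows essentially the same route as the paper: the same spectral-band decomposition with orthonormal bases indexed by $\TT_j$, $\SS_j$, the same translation of $(\mathfrak{S})$ and $(\widetilde{\mathfrak{S}})$ into the hypotheses of Lemma~\ref{le:combinatorics} via $\alpha=\delta^{k+\ell}$, $\beta=\delta^k$, the same interpretation of the extra index set $\II$ as an orthonormal basis of the one-sided extension space $H$, and the same use of Lemma~\ref{le:range of selfadjoint} to show that a bounded band shift carries $\ran A$ exactly onto $\ran B$, followed by Lemma~\ref{le:simple reduction to selfadjoints}. Your treatment of the range-preservation step and of the kernel unitary is in fact slightly more explicit than the paper's, which compresses both into one sentence each.
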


\begin{proof} Let us first note that $\dim\ker A=\dim\ker B$ says that $A,B$ fulfil the kernel condition~\eqref{eq:index}.

(1) Suppose~$(\widetilde{\mathfrak{S}})$ is satisfied by $A$ and $B$ for some $\delta<1$ and $a>0$; since decreasing $a$ preserves the inequalities, we may assume  $a=\delta^N$ for some $N\ge 1$. In particular,  taking, for $k\ge N$, $\alpha=\delta^{k+\ell}, \beta=\delta^{k}$, we obtain
\begin{align}
\dim E_{A}([\delta^{k+\ell},\delta^{k}))X&\le \dim E_{B}(\big[\delta^{k+\ell+1}, \delta^{k-1}))Y\label{eq:main1},\\
\dim E_{B}([\delta^{k+\ell},\delta^{k}))Y&\le \dim E_{A}(\big[\delta^{k+\ell+1}, \delta^{k-1}))X\label{eq:main2}.
\end{align}

Let $\|A\|, \|B\|< M$, and define  $\TT_j$ to be an orthonormal basis of  $E_{A}([\delta^{j+1}, \delta^j))X$ for $j\in \bbZ$, $\TT=\bigcup_{j\in\bbZ}^\infty \TT_j$. (Note that $\TT_j=\emptyset$ for $\delta^j\ge M$.) Obviously $\TT$ is an orthonormal basis for $\supp A$.
Define, for $t\in\TT_j$, $\tau(t)=\delta^{j+1}$; we have then, by  construction, $\TT_j=\tau^{-1}([\delta^{j+1}, \delta^j)$.

Similarly, we define $\SS_j$ to be an orthonormal basis of  $E_{B}([\delta^{j+1}, \delta^j))$, $\SS=\bigcup_{j\in\bbZ} \SS_j$ (so $\SS$ is an orthonormal basis for $\supp B$), and $\sigma(s)=\delta^{j+1}$ for $s\in \SS_j$.

We may then apply to the above objects Lemma~\ref{le:combinatorics}, and we obtain as conclusion one of the three cases (i), (ii), (iii) from its statement; we may also assume that $\delta'=\delta^q$ for some positive integer~$q$.

Suppose (i) is true, and we have thus a bijection $\eta:\TT\to \SS$, such that
\[
\delta^q\le \frac{\tau(t)}{\sigma(\eta(t))}\le \delta^{-q}.
\]
The above inequalities say that if $t$ is an element of an orthonormal basis of $E_A([\delta^k, \delta^{k+1}))X$, then $\eta(t)$ is an element of an orthonormal basis of $E_B([\delta^j, \delta^{j+1}))Y$ with $|k-j|\le q$.
By applying the criterium of  Lemma~\ref{le:range of selfadjoint}, it follows that the unitary operator $U:\supp A\to \supp B$ defined by $U(t)=\eta(t)$ maps $\ran A$ precisely onto $\ran B$.  As the kernel condition is satisfied by hypothesis, Lemma~\ref{le:simple reduction to selfadjoints} implies  that $A\sim B$.

In case (ii) the same argument works after we  add first to the domain of $A$ a direct summand $H$ of dimension equal to $\#\II$, and then  consider $A\oplus I_H$ instead of $A$. We obtain then $A\oplus I_H\sim B$. Case (iii) is similar, and thus (1) is proved.

To prove (2), we have to note that we are in the case covered by Lemma~\ref{le:combinatorics} (2), and therefore (i) is true, which by the above reasoning leads to $A\sim B$.
\end{proof}

\begin{remark}\label{re:dimension of the extension}
It follows from the proof  that the dimension of the extension space in case (1) is at most the maximum of the cardinals of $\TT\setminus \TT'$ and $\SS\setminus \SS'$, that is, the dimensions of the spectral projections of the two operators corresponding to the set $[\delta^N, \infty)$.

\end{remark}

Using  Lemmas~\ref{le:simple reduction to selfadjoints} and~\ref{le:equivalence of selfadjoints}, we obtain  the following characterisation of equivalence, that coincides essentially with the one in~\cite{FW}.

\begin{theorem}\label{th:equivalence in general}
Suppose $T,S$ are two linear operators. Then the following assertions are equivalent:
\begin{itemize}
\item[(i)]
$T\sim S$.
\item
[(ii)] Relations~\eqref{eq:index} are true and $|T|, |S|$ satisfy condition $(\mathfrak{S})$.
\item
[(iii)]
Relations~\eqref{eq:index} are true  and the restrictions of $|T|, |S|$ on their corresponding supports are equivalent.
\end{itemize}
\end{theorem}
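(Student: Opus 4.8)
The plan is to establish the cycle (i)$\Rightarrow$(ii)$\Rightarrow$(iii)$\Rightarrow$(i), using the reduction to positive operators already carried out in Lemmas~\ref{le:simple reduction to selfadjoints}, \ref{le:equivalence of selfadjoints}, and~\ref{le:main argument}. The two structural facts that make everything fit together are, first, that $\ker|T|=\ker T$ and $\ker|S|=\ker S$, so that~\eqref{eq:index} amounts to $\dim\ker|T|=\dim\ker|S|$ (together with the cokernel equality); and second, that condition $(\mathfrak{S})$ only involves the projections $E_{|T|}([\alpha,\beta))$ with $\alpha>0$ and is therefore insensitive to the kernels. Consequently $(\mathfrak{S})$ holds for $|T|,|S|$ if and only if it holds for the restrictions $A_0,B_0$ of $|T|,|S|$ to their supports $\supp T,\supp S$; these $A_0,B_0$ are injective positive operators, and $\ran A_0=\ran|T|$, $\ran B_0=\ran|S|$.

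For (i)$\Rightarrow$(ii), I would note that $T\sim S$ gives~\eqref{eq:index} by Lemma~\ref{le:immediate remarks equivalence}(iv), while the equivalence (i)$\Leftrightarrow$(iv) of Lemma~\ref{le:simple reduction to selfadjoints} supplies a unitary $V$ with $V(\ran|T|)=\ran|S|$; Lemma~\ref{le:equivalence of selfadjoints} then yields $(\mathfrak{S})$ for $|T|,|S|$. For (ii)$\Rightarrow$(iii), I would apply Lemma~\ref{le:main argument}(2) to the pair $A_0,B_0$: they are injective (so $\dim\ker A_0=0=\dim\ker B_0$) and satisfy $(\mathfrak{S})$ by the observation above, hence $A_0\sim B_0$, which is precisely the content of (iii) beyond~\eqref{eq:index} (already assumed). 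For (iii)$\Rightarrow$(i), starting from $A_0\sim B_0$, the implication (i)$\Rightarrow$(ii) of Lemma~\ref{le:simple reduction to selfadjoints} applied to this pair produces a unitary $V_0\colon\supp T\to\supp S$ with $V_0(\ran A_0)=\ran B_0$, that is, $V_0(\ran|T|)=\ran|S|$. Using $\dim\ker T=\dim\ker S$ from~\eqref{eq:index}, I would extend $V_0$ by an arbitrary unitary $\ker T\to\ker S$ to a unitary $V\colon X\to Y$ with $V(\ran|T|)=\ran|S|$; the implication (iv)$\Rightarrow$(i) of Lemma~\ref{le:simple reduction to selfadjoints} then gives $T\sim S$.

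The one point that requires genuine care is this last step. Condition (iii) of Lemma~\ref{le:simple reduction to selfadjoints} is phrased in terms of \emph{similarity} of the restrictions to supports, whereas condition (iii) of the present theorem uses strong equivalence, and these two notions are a priori different for injective positive operators with dense range. I expect this to be the main (if modest) obstacle, and I avoid it by routing the whole argument through the unitary-on-ranges characterization, i.e.\ the equivalences (i)$\Leftrightarrow$(ii)$\Leftrightarrow$(iv) of Lemma~\ref{le:simple reduction to selfadjoints}, rather than through its condition (iii); this never forces a comparison of similarity with strong equivalence. Beyond such bookkeeping, the only substantive input is Lemma~\ref{le:main argument}(2), which already encapsulates the Cantor--Bernstein and Hall's-marriage construction of Lemma~\ref{le:combinatorics}.
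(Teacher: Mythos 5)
Your proposal is correct and follows essentially the route the paper intends: the paper states Theorem~\ref{th:equivalence in general} without a written proof, deriving it from Lemmas~\ref{le:simple reduction to selfadjoints}, \ref{le:equivalence of selfadjoints}, and (implicitly, given its placement) Lemma~\ref{le:main argument}(2), which are exactly the ingredients you combine. Your extra care in routing everything through the unitary-on-ranges conditions (i), (ii), (iv) of Lemma~\ref{le:simple reduction to selfadjoints}, so as never to compare the ``similar'' of its condition (iii) with the ``equivalent'' of the theorem's condition (iii), is a sound and welcome piece of bookkeeping rather than a deviation.
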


We arrive now at the main result of this section.

\begin{theorem}\label{th:main}
If $T,S$ are linear operators, then the following are equivalent:
\begin{itemize}
\item
[(i)] $T$ and $S$ are equivalent after one-sided extension.
\item
[(ii)] $T$ and $S$ are Schur coupled.
\item
[(iii)] $T\sime S$.
 \item[(iv)] Relations~\eqref{eq:index} are true and $|T|, |S|$ satisfy  condition $(\widetilde{\mathfrak{S}})$.
\end{itemize}
\end{theorem}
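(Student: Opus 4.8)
The plan is to prove Theorem~\ref{th:main} by establishing a cycle of implications that routes everything through the concrete condition $(\widetilde{\mathfrak{S}})$. Several of the arrows are already available from the literature and from the earlier lemmas, so the real work lies in a single analytic implication. I would organize the argument as (ii)\Implies(iii)\Implies(iv)\Implies(i)\Implies(ii), checking that each step is either cited or reduces to the tools assembled above.

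\medskip

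First I would dispose of the two purely algebraic arrows. The implication (i)\Implies(ii), namely that equivalence after one-sided extension implies Schur coupling, is exactly the content of \cite[Theorem~2]{BGKR} and holds in full generality; similarly (ii)\Implies(iii), Schur coupling implies equivalence after extension, is \cite[Proposition~5]{BGKR} (also discussed in~\cite{BT0}). Neither requires any Hilbert-space structure, so I would simply quote them. This reduces the theorem to producing a bridge from (iii) back around to (i), and the natural place to cross is through the spectral condition (iv).

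\medskip

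The step (iii)\Implies(iv) is where I would connect the extension hypothesis to the spectral data of $|T|$ and $|S|$. By definition $T\sime S$ means $T\oplus I_Y\sim S\oplus I_X$, and the kernel condition~\eqref{eq:index} follows from Lemma~\ref{le:immediate remarks equivalence}(iv). For the spectral part I would pass to absolute values: one checks that $|T\oplus I_Y|=|T|\oplus I_Y$ and $|S\oplus I_X|=|S|\oplus I_X$, so by Theorem~\ref{th:equivalence in general} (the equivalence (i)\Leftrightarrow(ii) applied to the strongly equivalent operators $T\oplus I_Y$ and $S\oplus I_X$) the operators $|T|\oplus I_Y$ and $|S|\oplus I_X$ satisfy condition $(\mathfrak{S})$. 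Lemma~\ref{le:s and tilde s} then translates this immediately into the assertion that $|T|$ and $|S|$ satisfy $(\widetilde{\mathfrak{S}})$, which is precisely (iv).

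\medskip

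The remaining arrow (iv)\Implies(i) is the heart of the matter and the step I expect to be the main obstacle, since it is where the new combinatorial machinery earns its keep. Given~\eqref{eq:index} and condition $(\widetilde{\mathfrak{S}})$ for the positive operators $A=|T|$ and $B=|S|$, I would invoke Lemma~\ref{le:main argument}(1) to conclude that $A$ and $B$ are equivalent after one-sided extension; that is, after adjoining a suitable identity summand on one side, the resulting positive operators are strongly equivalent. The delicate point here is bookkeeping the kernels and the polar decompositions: strong equivalence of $|T|$ and $|S|$ (suitably extended) must be promoted back to strong equivalence of $T$ and $S$ themselves, and one must verify that attaching $I_H$ to $A=|T|$ corresponds to attaching $I_H$ to $T$ at the level of the original operators. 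I would handle this by appealing once more to Lemma~\ref{le:simple reduction to selfadjoints}, which says strong equivalence is detected by the kernel dimensions together with the unitary equivalence of ranges of the moduli; since $\ran|T\oplus I_H|=\ran(|T|\oplus I_H)$ and the kernel condition is preserved under the identity extension, the conclusion $T\oplus I_H\sim S$ (or $T\sim S\oplus I_H$) follows, giving exactly (i). Thus the whole cycle closes, and the main effort is concentrated in correctly feeding the hypotheses of Lemma~\ref{le:main argument} and interpreting its output back in terms of $T$ and $S$.
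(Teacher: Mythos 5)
Your proposal is correct and takes essentially the same approach as the paper: the identical cycle of implications, with (i)\Implies(ii)\Implies(iii) quoted from~\cite{BGKR}, (iii)\Implies(iv) obtained by passing to moduli and invoking Lemma~\ref{le:s and tilde s}, and (iv)\Implies(i) by feeding $|T|,|S|$ into Lemma~\ref{le:main argument} and promoting the conclusion back to $T$ and $S$ via Lemma~\ref{le:simple reduction to selfadjoints}. The only difference is that you spell out the bookkeeping (e.g.\ $|T\oplus I_H|=|T|\oplus I_H$ and the preservation of the kernel condition under identity summands) that the paper's proof leaves implicit.
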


\begin{proof}
According to the  remarks in Section~\ref{se:prelim}, we  know that (i)\Implies(ii)\Implies(iii). We have already noticed that if $T\sime S$ then~\eqref{eq:index} are true.  Also, $|T|\sime |S|$ implies, by Lemma~\ref{le:equivalence of selfadjoints}, that $|T|\oplus I_Y$ and $|S|\oplus I_X$  satisfy $(\mathfrak{S})$, whence by Lemma~\ref{le:s and tilde s}  $|T|, |S|$ satisfy   $(\widetilde{\mathfrak{S}})$. Thus (iii)\Implies(iv).

Finally, if $|T|, |S|$ satisfy  condition $(\widetilde{\mathfrak{S}})$, it follows by Lemma~\ref{le:main argument} that they are equivalent after one-sided extension. Together with relations~\eqref{eq:index}, this implies that $T,S$ are equivalent after one-sided extension.
\end{proof}

We may recapture easily a result proved in~\cite[Theorem 1.3]{HR}.

\begin{corollary}\label{co:closed range}
Suppose $T,S$ have closed range. Then the validity of relations~\eqref{eq:index} is equivalent to any of the conditions in Theorem~\ref{th:main}.
\end{corollary}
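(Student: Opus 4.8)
The plan is to prove the two implications separately, observing that one direction is already contained in the discussion preceding the corollary and needs no hypothesis on the ranges, while the closed-range assumption is used only for the reverse. For the easy direction, suppose one of the equivalent conditions of Theorem~\ref{th:main} holds; since they are all equivalent, in particular $T\sime S$, and then Lemma~\ref{le:immediate remarks equivalence}(iv) gives at once that the kernel condition~\eqref{eq:index} is satisfied. No assumption on $\ran T$ or $\ran S$ intervenes here.

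For the converse I would assume that $T,S$ have closed range and that~\eqref{eq:index} holds, and verify condition (iv) of Theorem~\ref{th:main}. The relations~\eqref{eq:index} are exactly the hypothesis, so it remains only to check that $|T|$ and $|S|$ satisfy condition $(\widetilde{\mathfrak{S}})$. The key point is that $T$ has closed range precisely when $|T|$ does: from the polar decomposition $T=U|T|$ the partial isometry $U$ is isometric on $\overline{\ran |T|}$ and satisfies $\ran T=U(\ran|T|)$, so the two ranges are closed simultaneously. Now a positive operator with closed range has a spectral gap at the origin: restricting $|T|$ to $\supp |T|$ gives a bounded-below positive operator, whence there is $\epsilon>0$ with $E_{|T|}((0,\epsilon))=0$, and similarly some $\epsilon'>0$ with $E_{|S|}((0,\epsilon'))=0$.

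To finish, I would take any $0<\delta<1$ and set $a=\min\{\epsilon,\epsilon'\}$. Then for every $0<\alpha<\beta\le a$ the interval $[\alpha,\beta)$ lies in the spectral gap of each operator, so $E_{|T|}([\alpha,\beta))=0$ and $E_{|S|}([\alpha,\beta))=0$; consequently the left-hand sides of~\eqref{eq:see1} and~\eqref{eq:see2} vanish and both inequalities hold trivially. Thus $(\widetilde{\mathfrak{S}})$ is satisfied, condition (iv) of Theorem~\ref{th:main} holds, and the corollary follows. I do not expect a genuine obstacle here: the entire content is the translation of the closed-range hypothesis into a spectral gap at $0$, after which $(\widetilde{\mathfrak{S}})$—a statement only about arbitrarily small spectral values—becomes vacuous, and the kernel condition supplies the remaining half of (iv).
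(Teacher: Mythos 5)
Your proof is correct and takes essentially the same route as the paper: the closed-range hypothesis is translated into spectral gaps $E_{|T|}((0,\epsilon))=E_{|S|}((0,\epsilon'))=0$, which makes condition $(\widetilde{\mathfrak{S}})$ hold vacuously, and together with the kernel condition~\eqref{eq:index} this gives (iv) of Theorem~\ref{th:main}, while the easy direction is Lemma~\ref{le:immediate remarks equivalence}(iv). You simply spell out details (the polar-decomposition argument and the explicit vanishing of the spectral projections) that the paper leaves implicit.
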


\begin{proof}
To see that (iv) in Theorem~\ref{th:main} is true, note that the assumption implies that $E_{|T|}((0,\epsilon))$ and $E_{|S|}((0,\epsilon))$ are 0 for some $\epsilon>0$, and so $|T|, |S|$ obviously satisfy  condition $(\widetilde{\mathfrak{S}})$.
\end{proof}

\section{Characterization of Schur coupling for some classes of operators}\label{se:ch schur}

The equivalence relations considered may be given a more concrete characterisation in the case of compact operators, in terms of their singular values.
It is convenient to introduce the following definition for sequences of positive numbers.

\begin{definition}\label{de:comparable}
Suppose $(t_n)_{n\ge0}$, $(s_n)_{n\ge0}$ are two sequences of positive numbers. We will say that they are \emph{comparable after a shift} if there 
exists $0<\delta<1$ and  $m\in\bbN $ such that either 
\[
\delta\le\frac{s_n}{t_{n+m}}\le\frac{1}{\delta} \text{ for all } n\ge0,
\]
or
\[
\delta\le\frac{t_n}{s_{n+m}}\le\frac{1}{\delta} \text{ for all } n\ge0.
\]

If the above relations are true for $m=0$, we will simply say that the two sequences are \emph{comparable}.
\end{definition}

It is clear that comparability after a shift is not affected by adding or deleting a finite number of values to any of the sequences.

For the case of compact operators a precise characterization of equivalence is obtained in the next theorem (probably well-known).

\begin{theorem}\label{th:sing}
Suppose $T,S$ are compact, the singular values of $T$ are $t_i\searrow 0$ and the singular values of $S$ are $s_i\searrow 0$. Then the following are equivalent:

{\rm (i)} $T\sim S$.

{\rm (ii)} Relations~\eqref{eq:index} are satisfied and the sequences $(t_i)$, $(s_i)$ are comparable.
\end{theorem}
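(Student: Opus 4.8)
The plan is to deduce Theorem~\ref{th:sing} from the already-established characterization of strong equivalence in Theorem~\ref{th:equivalence in general}, translating the abstract condition $(\mathfrak{S})$ on $|T|,|S|$ into the concrete notion of comparability of the singular value sequences. Since $T,S$ are compact with $\ker T^\perp=\supp|T|$ spanned by the singular vectors, the restriction of $|T|$ to its support is the diagonal operator with eigenvalues $t_1\ge t_2\ge\cdots$ (the strictly positive singular values, each repeated according to multiplicity and arranged decreasingly to $0$), and similarly for $|S|$. By Theorem~\ref{th:equivalence in general}, $T\sim S$ is equivalent to the kernel condition~\eqref{eq:index} together with $|T|,|S|$ satisfying $(\mathfrak{S})$. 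So the whole content reduces to the equivalence, for two positive compact operators given by decreasing diagonal sequences, of condition $(\mathfrak{S})$ with comparability of the sequences.

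First I would prove the easy direction, that comparability implies $(\mathfrak{S})$. Suppose $\delta\le s_n/t_n\le 1/\delta$ for all $n$ (the case $m=0$; the general $m=0$ is what comparability means, so no shift is involved here). The key observation is that for a decreasing sequence, the spectral projection $E_{|T|}([\alpha,\beta))$ has dimension equal to $\#\{n: \alpha\le t_n<\beta\}$, a count of indices. If the ratios $s_n/t_n$ lie in $[\delta,1/\delta]$, then whenever $t_n\in[\alpha,\beta)$ we have $s_n\in[\delta\alpha,\beta/\delta)$, so the index set counted on the left of~\eqref{eq:se1} injects into the one counted on the right, giving the inequality for $|T|,|S|$; symmetrically for~\eqref{eq:se2}. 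Thus $(\mathfrak{S})$ holds with the same $\delta$.

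The converse, that $(\mathfrak{S})$ forces comparability, is where I expect the main work. Here I would exploit the fact that the sequences are monotone and tend to $0$, which is special to compact operators and is exactly what makes a clean comparison possible. Fix the $\delta$ from $(\mathfrak{S})$ and apply the inequalities to half-open dyadic-type windows $[\delta^{k+1},\delta^k)$: relation~\eqref{eq:se1} with $\alpha=\delta^{k+1}$, $\beta=\delta^k$ controls how many $t_n$ fall in a given band by how many $s_n$ fall in the slightly enlarged band $[\delta^{k+2},\delta^{k-1})$, and vice versa. Counting indices band by band and using monotonicity, these two-sided counting inequalities bound the difference between the index $n$ at which $(t_n)$ crosses a given threshold and the index at which $(s_n)$ crosses the same threshold by a fixed constant (governed by $\delta$). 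This is precisely the combinatorial heart already abstracted in Lemma~\ref{le:combinatorics}; indeed, taking $\TT,\SS$ to be index sets and $\tau,\sigma$ the sequences, conclusion~(i) of that lemma (its part~(2), since for compacts the band inequalities hold for all $k\in\bbZ$, the large-$k$ bands being empty) yields a bijection $\eta$ with $\delta'\le \tau(t)/\sigma(\eta(t))\le 1/\delta'$; rearranging the two decreasing sequences by this bijection and invoking the remark that comparability after a shift is insensitive to finite alterations gives comparability of $(t_i)$ and $(s_i)$.

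The one genuinely delicate point is bookkeeping with multiplicities and with the kernel: the sequences $t_i,s_i$ as stated list only the \emph{strictly positive} singular values, so the finitely-or-infinitely-many zero eigenvalues coming from $\ker T,\ker S$ are accounted for separately by the kernel condition~\eqref{eq:index}, and must be kept out of the comparability count. Likewise, when transporting the bijection $\eta$ from Lemma~\ref{le:combinatorics} back to a statement about the monotone sequences $(t_i),(s_i)$ themselves, I would note that a band-preserving bijection between two decreasing sequences differs from the identity matching by a bounded rearrangement, so that comparing $t_i$ with $s_i$ directly (rather than with $s_{\eta(i)}$) only costs a fixed multiplicative constant and at most a finite shift, which is absorbed into comparability after a shift and then, by monotonicity forcing the shift to be controllable, into plain comparability. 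This reconciliation of the abstract bijection with the ordered sequences is the step I would write most carefully.
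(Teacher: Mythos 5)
Your reduction via Theorem~\ref{th:equivalence in general} and your proof that comparability implies condition $(\mathfrak{S})$ are exactly the paper's. The gap is in the converse direction. Invoking Lemma~\ref{le:combinatorics}(2) is legitimate (condition $(\mathfrak{S})$ holds for all $0<\alpha\le\beta$, hence the band inequalities hold for all $k\in\bbZ$), and it does produce a bijection $\eta$ between the index sets with $\delta'\le t_j/s_{\eta(j)}\le 1/\delta'$. But your passage from this bijection back to the \emph{ordered} sequences is not a proof. You assert that the bijection ``differs from the identity matching by a bounded rearrangement,'' costing ``at most a finite shift,'' which is then upgraded to plain comparability ``by monotonicity forcing the shift to be controllable.'' Neither half is justified: nothing in Lemma~\ref{le:combinatorics} says $\eta$ is close to the identity in any index sense, and the principle you lean on at the end is false --- comparability after a shift does \emph{not} imply comparability, even for decreasing null sequences. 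Take $t_n=1/n!$ and $s_n=t_{n+1}$: these are comparable after a shift with $m=1$, yet $s_n/t_n=1/(n+1)\to 0$. Since statement (ii) demands comparability with $m=0$, this is precisely the point that must be nailed down, and your sketch routes it through a wrong intermediate claim.

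The gap is fixable, and the fix shows the detour through Lemma~\ref{le:combinatorics} is unnecessary. The correct statement is: if $(t_i)$, $(s_i)$ are decreasing and some bijection $\eta$ of the index sets satisfies $s_{\eta(j)}\ge\delta' t_j$ for all $j$, then $s_i\ge \delta' t_i$ for every $i$; indeed $\eta(1),\dots,\eta(i)$ are $i$ distinct indices $k$ with $s_k\ge\delta' t_j\ge\delta' t_i$, so the $i$-th largest value $s_i$ is already $\ge\delta' t_i$, and symmetrically for the other bound. But this counting-plus-monotonicity argument is exactly the paper's proof, applied there directly to condition $(\mathfrak{S})$ with no bijection at all: fixing $i$, setting $j_i=\max\{j: t_j=t_i\}$ and applying~\eqref{eq:se1} with $\alpha=t_{j_i}$ and $\beta>\|T\|$ gives $\dim E_{|S|}([\delta t_{j_i},\beta/\delta))\ge j_i$, hence $s_{j_i}\ge\delta t_{j_i}$ and therefore $s_i\ge s_{j_i}\ge\delta t_i$. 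So once you write your ``delicate step'' correctly, you will have reproduced the paper's short direct argument, with Lemma~\ref{le:combinatorics} riding along as a dispensable extra layer; as submitted, the proposal's justification of that step does not stand.
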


\begin{proof}
The singular values of an operator are the eigenvalues of its modulus. Therefore, by Theorem~\ref{th:equivalence in general}, we have to prove that for the positive compact operators $|T|, |S|$ condition $(\mathfrak{S})$ is equivalent to comparability of their respective eigenvalues; that is, the existence of $0<\delta<1$ such that, for all $i\in\bbN$,
\begin{equation}\label{eq:(ii)(2)}
\delta\le\frac{t_i}{s_i}\le 1/\delta.
\end{equation}

Suppose~\eqref{eq:(ii)(2)} is satisfied and take $0<\alpha\le\beta<\infty$. The interval $[\alpha,\beta)$ contains a finite number of eigenvalues of $|T|$, say (in decreasing order and taking into account multiplicities) $t_p\ge\dots\ge t_{p+q}$.  By~\eqref{eq:(ii)(2)} it follows that $s_i\in [\delta\alpha, \beta/\delta)$ for all $i=p,\dots,p+q$, whence~\eqref{eq:se1} is satisfied for $A=|T|$, $B=|S|$. A similar argument yields~\eqref{eq:se2}. 

Conversely, suppose $(\mathfrak{S})$ is satisfied for $A=|T|$, $B=|S|$. To prove~\eqref{eq:(ii)(2)}, fix $i\in\bbN$, and suppose that $j_i=\max\{j\in\bbN: t_j=t_i\}$. If $\alpha=t_{j_i}$  $\beta>\|T\|$, applying~\eqref{eq:se1}, it follows that $\dim E_{|S|}([\delta t_{j_i}, \frac{\beta}{\delta}))\ge j_i$. Therefore  $\delta t_{j_i}\le s_{j_i} $, whence $\delta t_i=\delta t_{j_i}\le s_{j_i}\le s_i$, which yields one of the inequalities~\eqref{eq:(ii)(2)}. The other is proved similarly.
\end{proof}

Theorem~\ref{th:sing} has consequences for Schur coupling. We start with the case of compact operators. 
\begin{theorem}\label{th:schur coupling for compacts}
Suppose $T$ and $S$ are compact. Then any of the conditions in Theorem~\ref{th:main} is equivalent to 

\begin{itemize}
\item
[(v)] Equalities~\eqref{eq:index} are satisfied and the singular values of $T$ and $S$ are comparable after a shift.

\end{itemize}
\end{theorem}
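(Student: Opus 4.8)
The plan is to show that (v) is equivalent to condition (i) of Theorem~\ref{th:main} --- that $T$ and $S$ are equivalent after one-sided extension --- which suffices since all the conditions there coincide. The decisive point is that for compact operators the one-sided extension can be taken \emph{finite dimensional}, so that the extended operators remain compact and Theorem~\ref{th:sing} becomes available.

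First I would record that finite-dimensionality. Applying Lemma~\ref{le:main argument} to the positive compact operators $|T|,|S|$, Remark~\ref{re:dimension of the extension} bounds the dimension of the extension space by $\max\{\dim E_{|T|}([\delta^N,\infty)),\,\dim E_{|S|}([\delta^N,\infty))\}$; for compact operators these spectral projections have finite rank, so the extension space $H$ may be chosen finite dimensional (and the lift from $|T|,|S|$ to $T,S$ in the proof of Theorem~\ref{th:main} keeps the same $H$). Hence $T,S$ are equivalent after one-sided extension if and only if there is a finite-dimensional $H$ with $T\oplus I_H\sim S$ or $T\sim S\oplus I_H$, and since $\dim H<\infty$ the operators $T\oplus I_H$ and $S\oplus I_H$ are again compact.

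Next I would translate strong equivalence through Theorem~\ref{th:sing}. Take the case $T\oplus I_H\sim S$ with $\dim H=m$, the other being symmetric. The singular values of $T\oplus I_H$ are those of $T$ together with $m$ copies of $1$, rearranged decreasingly; call them $(u_i)$. For all large $i$ one has $u_i=t_{i-m}$, so $(u_i)$ coincides with $(t_i)$ up to deleting finitely many entries and shifting the index by $m$. Now Theorem~\ref{th:sing} says $T\oplus I_H\sim S$ is equivalent to \eqref{eq:index} for the pair $T\oplus I_H,S$ together with comparability of $(u_i)$ and $(s_i)$. Since $I_H$ is invertible, $\dim\ker(T\oplus I_H)=\dim\ker T$ and $\dim\ker(T\oplus I_H)^*=\dim\ker T^*$, so that kernel condition is exactly \eqref{eq:index} for $T,S$; and comparability of $(u_i)$ with $(s_i)$, because $u_i=t_{i-m}$ on the tail and comparability after a shift ignores finitely many terms, is precisely comparability of $(t_i)$ with $(s_i)$ after the shift $m$. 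Chaining these equivalences yields (v)$\Leftrightarrow$(i), with the case $T\sim S\oplus I_H$ producing the other alternative in the definition of comparability after a shift.

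The main thing to watch is the finite-rank bookkeeping. I expect the principal obstacle to be the clean passage from the abstract extension of Theorem~\ref{th:main} to a finite-dimensional one with correctly transported index data, together with the degenerate case where $T$ or $S$ has finite rank. In that degenerate case, $T\oplus I_H\sim S$ forces $S$ to have finite rank as well (strong equivalence preserves rank), both operators then have closed range, and Corollary~\ref{co:closed range} collapses all conditions to \eqref{eq:index}; correspondingly two finite sequences of positive numbers are comparable after a shift, so (v) also reduces to \eqref{eq:index}, and the stated equivalence persists. For the generic infinite-rank case the argument above is complete.
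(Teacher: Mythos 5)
Your proposal is correct and follows essentially the same route as the paper: use Remark~\ref{re:dimension of the extension} (via Lemma~\ref{le:main argument}) to see that for compact operators the one-sided extension space is finite dimensional, so the extended operators remain compact, and then translate strong equivalence through Theorem~\ref{th:sing}, with the shift $m=\dim H$ accounting for the finitely many added singular values. Your extra bookkeeping (the kernel condition passing through $T\oplus I_H$, and the finite-rank degenerate case via Corollary~\ref{co:closed range}) only makes explicit what the paper's terse proof leaves implicit.
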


\begin{proof}
In the case of compact positive operators the spectral projections corresponding to any set of the form $[\delta,\infty)$ are finite dimensional. By Remark~\ref{re:dimension of the extension}, it follows that the extension space is finite dimensional, and thus, for instance, $T\oplus I_H\sim S$, with $m:=\dim H<\infty$. Applying Theorem~\ref{th:sing} to these operators finishes the proof.
\end{proof}

This also allows us to recapture the following result.

\begin{corollary}[\cite{H}]\label{co:ideals}
If $T\sime S$ and $T$ belongs to some ideal of compact operators, then $S$ belongs to the same ideal.
\end{corollary}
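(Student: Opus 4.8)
The plan is to use the characterization of equivalence after extension in terms of singular values that was just established in Theorem~\ref{th:schur coupling for compacts}, combined with the standard fact that ideals of compact operators are characterized by conditions on singular value sequences. First I would recall that classical ideals of compact operators (the so-called symmetrically normed ideals, or more generally the calkin correspondence) are determined by a set of admissible singular value sequences: membership $T\in\mathcal{I}$ depends only on the sequence $(t_i)$ of singular values of $T$, and the defining set of sequences is a \emph{calkin space}---it is closed under the operations of permuting entries, and, crucially for us, it is solid (if $(t_i)\in\mathcal{I}$ and $0\le s_i\le t_i$ for all $i$, then $(s_i)\in\mathcal{I}$) and closed under passing to sequences dominated by a constant multiple and a finite shift.

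The key step is to translate the hypothesis $T\sime S$ into a relation between the singular value sequences. By Theorem~\ref{th:schur coupling for compacts}, since $T$ is compact and $T\sime S$, we know that $S$ is compact as well (one should check this reduction, but it follows because condition~(iv) of Theorem~\ref{th:main} together with compactness of $|T|$ forces $|S|$ to have purely discrete spectrum accumulating only at $0$), and moreover the singular values $(t_i)$ of $T$ and $(s_i)$ of $S$ are \emph{comparable after a shift}. This means there exist $0<\delta<1$ and $m\in\bbN$ such that, up to interchanging the roles of the two sequences, $\delta\le s_n/t_{n+m}\le 1/\delta$ for all $n\ge 0$. In particular $s_n\le \frac{1}{\delta}\, t_{n+m}$, so the sequence $(s_n)$ is dominated by a constant multiple of a finite shift of $(t_n)$.

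The main obstacle---and the only substantive point---is to verify that membership in an ideal of compact operators is preserved under exactly these two operations: multiplication of the singular value sequence by a constant, and a finite shift. Multiplication by a constant $1/\delta$ is immediate, since any operator ideal is a linear subspace and hence closed under scalar multiples, so $\frac{1}{\delta}T$ lies in the same ideal as $T$; at the level of singular values this scales the sequence by $1/\delta$. A finite shift by $m$ corresponds, at the operator level, to a finite-rank perturbation (deleting the $m$ largest singular values), and finite-rank operators belong to every ideal of compact operators, so shifting does not affect ideal membership either. Combining these with solidity of the ideal---which allows us to pass from the dominating sequence $\frac{1}{\delta}(t_{n+m})$ to the smaller sequence $(s_n)$---we conclude that $S$ lies in the same ideal as $T$. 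The roles of $T$ and $S$ being symmetric in the conclusion of Theorem~\ref{th:schur coupling for compacts}, the argument is complete, and the whole proof reduces to the elementary observation that Calkin spaces are stable under these three harmless manipulations of singular value sequences.
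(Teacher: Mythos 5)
Your second half of the argument --- passing from ``comparable after a shift'' to ideal membership via the Calkin correspondence (solidity of the sequence space, invariance under multiplication by a constant, and invariance under finite shifts because finite-rank operators lie in every nonzero ideal) --- is correct, and it is exactly the argument the paper leaves implicit when it says that Theorem~\ref{th:schur coupling for compacts} ``allows us to recapture'' the corollary: the paper offers no further proof, so on this portion your route and the paper's coincide.

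The genuine gap is the parenthetical reduction at the start, where you claim that $T\sime S$ together with compactness of $T$ forces $S$ to be compact. This is false, and the justification you give cannot be repaired: condition (iv) of Theorem~\ref{th:main} involves condition $(\widetilde{\mathfrak{S}})$, which by design constrains the spectral projections of $|S|$ only on intervals contained in $(0,a)$ for some $a>0$, and says nothing whatsoever about the spectrum of $|S|$ above $a$; so it cannot force $|S|$ to have ``purely discrete spectrum accumulating only at $0$.'' Concretely, take $T$ compact on $X$ and $S=T\oplus I_H\in\LL(Y)$, $Y=X\oplus H$, with $H$ infinite dimensional. Then $T\oplus I_Y$ and $S\oplus I_X$ differ only by a permutation of the direct summands, so $T\sime S$ (indeed $T$ and $S$ are even equivalent after one-sided extension), yet $S$ is not compact and hence belongs to no ideal of compact operators. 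What does follow from $(\widetilde{\mathfrak{S}})$ is only that $|S|E_{|S|}([0,\epsilon])$ is compact for suitable $\epsilon$ --- precisely condition (2) of Theorem~\ref{th:main description2}, whose very purpose is to treat the case of $T$ compact, $S$ noncompact, $T\sime S$, showing that compactness does not transfer under $\sime$. Consequently the corollary has to be read with the standing hypothesis of Theorem~\ref{th:schur coupling for compacts}, namely that \emph{both} $T$ and $S$ are compact (equivalently, with compactness of $S$ as part of the hypothesis); read literally without it, the statement fails on the example above, so no argument could establish the reduction you attempt. Once both operators are assumed compact, your proof is complete and agrees with the paper's.
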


A slight modification of the argument yields a characterization of Schur coupling for the  case in which one operator is compact and the other is not; this is the content of the next statement.
 
\begin{theorem}\label{th:main description2}

Suppose  $T\in\LL(X)$ is compact, while $S\in\LL(Y)$ is not. Then $T\sime S$  if and only if they fulfill the following conditions: 
\begin{enumerate}
\item
Equalities~\eqref{eq:index} are satisfied.

\item
There exists $\epsilon>0$ such that  $|S|E_{|S|}([0,\epsilon])$ is compact.

\item
If  $(s_n)_{n\ge0}$ are the eigenvalues of $|S|E_{|S|}([0,\epsilon])$ and $(t_n)_{n\ge0}$ are the singular values of $T$, then they are comparable after a shift.
\end{enumerate}

\end{theorem}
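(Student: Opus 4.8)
The plan is to deduce the whole statement from the spectral characterisation of equivalence after extension provided by Theorem~\ref{th:main}: that $T\sime S$ holds if and only if the kernel condition~\eqref{eq:index} is satisfied and the positive operators $|T|,|S|$ satisfy condition $(\widetilde{\mathfrak{S}})$. Since the requested condition (1) is precisely~\eqref{eq:index}, the theorem reduces to proving that, \emph{under the standing assumption that $|T|$ is compact while $|S|$ is not}, condition $(\widetilde{\mathfrak{S}})$ for the pair $|T|,|S|$ is equivalent to the conjunction of (2) and (3). The guiding principle is that $(\widetilde{\mathfrak{S}})$ constrains only spectral projections supported near the origin, so the part of $|S|$ that is bounded away from $0$ should be irrelevant.

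First I would extract (2) from $(\widetilde{\mathfrak{S}})$. Assuming $(\widetilde{\mathfrak{S}})$ holds with parameters $\delta,a$, for every $0<\alpha<\beta\le a$ inequality~\eqref{eq:see2} gives $\dim E_{|S|}([\alpha,\beta))\le \dim E_{|T|}([\delta\alpha,\beta/\delta))$, and the right-hand side is finite because $|T|$ is compact and $\delta\alpha>0$. Hence every spectral subspace of $|S|$ attached to an interval bounded away from $0$ but below $a$ is finite dimensional; choosing $\epsilon<a$ (avoiding, if necessary, an eigenvalue of infinite multiplicity at the endpoint), this says exactly that the spectrum of $|S|$ in $(0,\epsilon]$ consists of eigenvalues of finite multiplicity accumulating only at $0$, i.e. that $|S|E_{|S|}([0,\epsilon])$ is compact. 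This is (2), and it lets us speak of its eigenvalues $(s_n)$.

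Next I would match $(\widetilde{\mathfrak{S}})$ with the comparability statement (3). Decomposing $|S|=S_0\oplus S_1$ with $S_0=|S|E_{|S|}([0,\epsilon])$ compact and $S_1=|S|E_{|S|}((\epsilon,\infty))$ bounded below by $\epsilon$, I note that for intervals $[\alpha,\beta)$ with $\beta/\delta\le\epsilon$ both the interval and its widening stay below $\epsilon$, so the spectral projections of $|S|$ and of $S_0$ coincide there. Since $(\widetilde{\mathfrak{S}})$ is preserved under shrinking $a$, it follows that $(\widetilde{\mathfrak{S}})$ for $|T|,|S|$ is literally $(\widetilde{\mathfrak{S}})$ for the two \emph{compact} operators $|T|,S_0$, a condition on their small eigenvalues only. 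I would then run the eigenvalue count from the proof of Theorem~\ref{th:sing}—comparing the finitely many eigenvalues of each operator that lie in a given interval, widened by the factor $\delta$—to see that this restricted condition is equivalent to the comparability of the tails of the eigenvalue sequences $(t_n)$ of $|T|$ and $(s_n)$ of $S_0$. Because the singular values of $T$ exceeding $\epsilon$ form a finite set absent from $(s_n)$, comparability of tails is exactly comparability after a shift of the full sequences $(t_n)$ and $(s_n)$ in the sense of Definition~\ref{de:comparable}; running the count in the opposite direction recovers $(\widetilde{\mathfrak{S}})$ from (3).

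The step I expect to be the most delicate is this last one: reconciling the additive, slack-free dimension inequalities of $(\widetilde{\mathfrak{S}})$ with the multiplicative, index-shifted comparability of Definition~\ref{de:comparable}. One must verify that the finitely many large singular values of $T$ are absorbed cleanly into the shift parameter $m$ rather than violating the inequalities, that the choice $\beta/\delta\le\epsilon$ genuinely neutralises the summand $S_1$, and that the widening of intervals by $\delta$ corresponds precisely to the multiplicative constant in comparability. This bookkeeping of eigenvalue multiplicities, with possible ties and the direction of the shift, is where the care is needed; the rest is a routine application of the results already established.
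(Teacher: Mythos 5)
Your overall architecture---reduce via Theorem~\ref{th:main}(iv) to a spectral statement, extract (2) from~\eqref{eq:see2}, split off $S_0=|S|E_{|S|}([0,\epsilon])$ and observe that $(\widetilde{\mathfrak{S}})$ only sees the part of $|S|$ below $\epsilon$---matches the paper's proof in spirit, and both your derivation of (2) and the direction $(2)+(3)\Rightarrow(\widetilde{\mathfrak{S}})$ (where one may shrink $a$ below the $m$-th eigenvalue so that the shift causes no additive slack) are sound. The genuine gap is in the step you yourself flag as delicate: one cannot obtain (3) from $(\widetilde{\mathfrak{S}})$ by ``running the eigenvalue count from the proof of Theorem~\ref{th:sing}''. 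That count works for condition $(\mathfrak{S})$ precisely because there the test intervals $[\alpha,\beta)$ may be taken with $\beta>\|T\|$, so each inequality counts \emph{all} eigenvalues from the top down and anchors the two sequences with zero shift. Under $(\widetilde{\mathfrak{S}})$ the intervals are capped at $a$, and the same count yields only two mismatched one-sided bounds: writing $p=\#\{n: t_n\ge a\}$, $p'=\#\{n: t_n\ge a/\delta\}$, $q=\#\{n: s_n\ge a\}$, $q'=\#\{n: s_n\ge a/\delta\}$, one gets $s_n\ge \delta\, t_{n+(p-q')}$ and $s_n\le \delta^{-1} t_{n+(p'-q)}$, i.e.\ a lower bound and an upper bound with \emph{different} shifts. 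Since $p\ge p'$ and $q\ge q'$, these can be merged into a single shift only when $p=p'$ and $q=q'$, which there is no reason to have. Indeed, the anchored inequalities are strictly weaker than comparability after a shift: take $t$ to consist of one eigenvalue at each super-separated value $2^{-k^2}$, and $s$ to consist of two eigenvalues at each $2^{-(2j)^2}$ and none at the odd-indexed values; the anchored counting inequalities hold with bounded discrepancy, yet no shift $m$ makes the ratios $s_n/t_{n+m}$ bounded (the required $m$ alternates between $0$ and $1$ along even and odd indices). So the counting argument alone cannot close this step.

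What bridges the gap is exactly the machinery you bypassed: the paper invokes Theorem~\ref{th:schur coupling for compacts} for the pair of compact truncations (which in turn rests on Lemma~\ref{le:main argument}, i.e.\ on Hall's marriage lemma via Lemma~\ref{le:combinatorics}, together with Remark~\ref{re:dimension of the extension}). The marriage lemma uses the full interval-by-interval strength of $(\widetilde{\mathfrak{S}})$---not merely the intervals anchored at $a$ that the counting extracts---to produce a genuine value-preserving bijection between the eigenvalue sets with a \emph{finite-dimensional} leftover; this gives $T'\oplus I_H\sim S'$ with $m=\dim H<\infty$, and only then, in the uncapped setting where $(\mathfrak{S})$ holds, does the count of Theorem~\ref{th:sing} apply, the extension dimension $m$ materializing as the shift. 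The repair to your proposal is therefore simply to cite Theorem~\ref{th:schur coupling for compacts} for the pair $|T|$, $S_0$ (their kernel condition supplied by (1)), as the paper does; with that substitution the rest of your argument, including your treatment of the converse, goes through.
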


\begin{proof}  
If $T,S$ are Schur coupled, and thus equivalent after one-sided extension, we must have $T\oplus I_H\sim S$ (the other possibility would lead to the equivalence of a compact operator with a non compact one). By Theorem~\ref{th:equivalence in general} (1) is satisfied; moreover, applying condition $(\mathfrak S)$ for any $\alpha>0$ and $\beta=\delta/2$, it follows that 
\[
\dim E_{|S|}([\alpha, \delta/2))Y\le \dim E_{|T|\oplus I_H}([\delta\alpha, 1/2))X= \dim E_{|T|}([\delta\alpha, 1/2))<\infty.
\]
Therefore, if we define $\epsilon=\delta/2$, then $|S|E_{|S|}([0,\epsilon))$ is compact.

Consider then the operators $T'=|T|E_{|T|}([0,\epsilon))$, $S'=|S|E_{|S|}([0,\epsilon))$. Then  $T', S'$ satisfy the assumptions of  Theorem~\ref{th:main} (iv), whence $T'\sime S'$. Since they are both compact, Theorem~\ref{th:schur coupling for compacts} implies that their eigenvalues are comparable after a shift. But the singular values of $T$ are obtained by adding a finite number of values to the eigenvalues of $T'$, and so it follows that condition (3) from the statement is satisfied.

Conversely, suppose (1)--(3) are satisfied. Write $|S|=S'\oplus S''$, with $S'=|S|$ restricted to $ E_{|S|}([0,\epsilon])$. Then Theorem~\ref{th:schur coupling for compacts} says that $S'\sime T$, while $S''$ is an invertible operator  and so $S''\sime I_H$ for some Hilbert space $H$. It follows that $|S|\sime T\oplus I_Y$. Condition (1) implies then that $S\sime T\oplus I_H$, and therefore $T,S$ are Schur coupled. 
\end{proof}

We complete the section with a result pertaining to the case of both operators noncompact.

\begin{theorem}\label{th:separable noncompact}
Suppose $T\in\LL(X)$, $S\in\LL(Y)$, both noncompact, and  $\dim X=\dim Y$. If $T\sime S$, then $T\sim S$.
\end{theorem}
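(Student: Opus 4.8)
The plan is to pass to the positive operators $A=|T|$ and $B=|S|$ and to upgrade the weak condition $(\widetilde{\mathfrak{S}})$, which $T\sime S$ supplies, to the strong condition $(\mathfrak{S})$; by Theorem~\ref{th:equivalence in general} (through Lemma~\ref{le:simple reduction to selfadjoints}) this yields $T\sim S$. First I would record what the hypothesis gives. By Theorem~\ref{th:main}, $T\sime S$ means that the kernel condition~\eqref{eq:index} holds and that $A,B$ satisfy $(\widetilde{\mathfrak{S}})$. Feeding this into Lemma~\ref{le:main argument}(1) together with Remark~\ref{re:dimension of the extension}, I obtain that $A$ and $B$ are equivalent after a \emph{one-sided} extension whose dimension is bounded by the dimension of the spectral subspaces of $A$ and $B$ lying away from $0$, say $A\oplus I_H\sim B$ with $\dim H$ controlled by $\dim E_A([\delta^N,\infty))X$ and $\dim E_B([\delta^N,\infty))Y$ (the other case $A\sim B\oplus I_H$ being symmetric).

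The heart of the argument is then an \emph{absorption} step: a noncompact positive operator should swallow an identity summand carried by a space no larger than its own part bounded away from $0$. Since $T$ is noncompact, $A$ has an infinite-dimensional spectral subspace bounded away from $0$, i.e. $\dim E_A([\epsilon,\infty))X$ is infinite for some $\epsilon>0$, and likewise for $B$. I would establish $A\oplus I_H\sim A$ by checking condition $(\mathfrak{S})$ for the pair $(A\oplus I_H,\,A)$: the two operators have identical small-eigenvalue parts, so the inequalities for bands near $0$ are automatic, while in any fixed band bounded away from $0$ the extra copies of the eigenvalue $1$ introduced by $I_H$ are absorbed by the infinitely many eigenvalues of $A$ already present in a neighbourhood, provided $\delta$ is chosen small enough to widen the comparison bands (so that eigenvalue scale $1$ and the scale of $A$'s large band both fall inside one band). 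Granting $A\oplus I_H\sim A$ (and the analogous statement for $B$), I conclude $A\sim A\oplus I_H\sim B$, whence $T\sim S$.

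The delicate point, and where the two hypotheses \emph{both noncompact} and $\dim X=\dim Y$ are genuinely used, is the cardinal bookkeeping in the absorption step: one must know that the dimension of the extension does not exceed the dimension of the large band of the operator that receives it. This is precisely what the dimension equality, the kernel condition~\eqref{eq:index}, and the noncompactness of \emph{both} operators provide, forcing $\supp A$ and $\supp B$ to have the same infinite dimension; in the separable setting both are simply $\aleph_0$, and a $\le\aleph_0$-dimensional extension is absorbed on either side without difficulty. I expect this matching of the support dimensions to be the main obstacle: it is exactly what prevents the large band of one operator from outgrowing the total size of the other, and it is the step that would fail if only one of the operators were noncompact, or if the ambient dimensions were allowed to differ (in which case a band of dimension strictly exceeding $\supp$ of the partner can survive, and the absorption breaks down).
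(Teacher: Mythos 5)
Your overall strategy is the same absorption idea as the paper's, but routed through much heavier machinery. The paper never invokes Theorem~\ref{th:main}, Lemma~\ref{le:main argument} or Remark~\ref{re:dimension of the extension}: it splits $T=T_0\oplus T_1$ along an infinite-dimensional band $E_{|T|}([\delta,\|T\|])X$, notes that $T_0$ is invertible onto its closed range, applies Lemma~\ref{le:immediate remarks equivalence}(iii) to get $T_0\sim T_0\oplus I_Y$, hence $T\sim T\oplus I_Y$ and likewise $S\sim S\oplus I_X$, and then chains $T\sim T\oplus I_Y\sim S\oplus I_X\sim S$ directly from the definition of $\sime$, with no re-verification of condition $(\mathfrak{S})$. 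The genuine gap in your write-up is exactly at the point you yourself flag as delicate, and your proposed resolution of it is wrong: equal ambient dimensions plus the kernel condition~\eqref{eq:index} do \emph{not} force $\supp|T|$ and $\supp|S|$ to have the same dimension, because cardinal arithmetic admits no cancellation ($\aleph_0+\kappa=\kappa+\kappa=\kappa$ for uncountable $\kappa$). Worse, even equal support dimensions are not what absorption needs: the band of the \emph{receiving} operator bounded away from zero must dominate $\dim H$, and $\dim H$ can be as large as the band of the \emph{other} operator.

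This failure is not repairable in the stated generality. Fix an uncountable cardinal $\kappa$, let $D$ be the diagonal operator with eigenvalues $(2^{-n})_{n\ge1}$ on a separable space $L$, and set $T=I_K\oplus D\oplus 0_Z$ on $X=K\oplus L\oplus Z$ and $S=I_{K'}\oplus D\oplus 0_{Z'}$ on $Y=K'\oplus L\oplus Z'$, where $\dim K=\dim Z=\dim Z'=\kappa$ and $\dim K'=\aleph_0$. Both operators are noncompact, $\dim X=\dim Y=\kappa$, and \eqref{eq:index} holds; moreover $T\oplus I_Y$ and $S\oplus I_X$ are unitarily equivalent (each has eigenvalue $1$ with multiplicity $\kappa$, eigenvalues $2^{-n}$ with multiplicity one, and kernel of dimension $\kappa$), so $T\sime S$. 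Yet $T\not\sim S$: if $S=UTV$ then $\ran S=U(\ran T)$ would contain the closed $\kappa$-dimensional subspace $U(K)$, while $\ran S\subset K'\oplus L$ is separable. In this example any one-sided extension relation must take the form $|T|\sim|S|\oplus I_H$ with $\dim H\ge\kappa$, and $|S|$, whose band away from zero is only $\aleph_0$-dimensional, cannot swallow $I_H$; no choice of $\delta$ in your $(\mathfrak{S})$-verification can help, since the conclusion itself is false. In fairness, the same example defeats the paper's own proof---the step ``$T_0\sim T_0\oplus I_Y$ since both are invertible on spaces of the same dimension'' silently requires $\dim E_{|T|}([\delta,\|T\|])X\ge\dim Y$, which noncompactness alone does not give---so the theorem as stated fails without separability. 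Both your argument and the paper's are correct when $X,Y$ are separable (every infinite cardinal in sight is then $\aleph_0$ and absorption is automatic), or under the added hypothesis that the bands $E_{|T|}([\delta,\infty))X$ and $E_{|S|}([\delta,\infty))Y$ have dimension $\dim X=\dim Y$ for some $\delta>0$.
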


\begin{proof}
If $T$ is not compact, then there exists $\delta>0$ such that the spectral projection $E_\delta=E([\delta,\|T\|])$ of $|T|$ is infinite dimensional. If we denote $T_0=T|E_\delta X$ with values in $T(E_\delta X )$, then $T_0\sim T_0\oplus I_Y$ by Lemma~\ref{le:immediate remarks equivalence} since both are invertible on spaces of the same dimension. It follows  that $T\sim T\oplus I_Y$. Similarly we obtain $S\sim S\oplus I_X$.

Now, if $T\sime S$, then $T\oplus I_Y\sim S\oplus I_X$, and 
\[ 
T\sim T\oplus I_Y\sim S\oplus I_X\sim S.\qedhere
\]
\end{proof}

\section{Final remarks}

If $T,S$ are compact, by Theorem~\ref{th:schur coupling for compacts} they are equivalent after a one-sided extension of finite rank. The rank might or might not be uniquely determined, as shown by the following examples.

\begin{enumerate}
\item
If $T=S$ is the diagonal operator with eigenvalues $(\frac{1}{n})$, then $T\sim S\oplus I_H$ for any finite dimensional space~$H$.

\item
If $T=S$  is the diagonal operator with eigenvalues $(\frac{1}{n!})$, then $T\sim S\oplus I_H$ implies $H=\{0\}$.
\end{enumerate}

For finite dimensional spaces, the characterization of Schur coupling appears already in~\cite[Theorem 1]{BT2}: if $T\in\LL(X)$ and $S\in\LL(Y)$, then $T\sime S$  if and only if $\dim X-\rank T=\dim Y-\rank S$. Note that this is also a particular case of Corollary~\ref{co:closed range}.

\section*{Acknowledgements}
The  author is  supported by a grant of the Romanian National Authority for Scientific
Research, CNCS Ð UEFISCDI, project number PN-II-ID-PCE-2011-3-0119.

\end{document}